\newtheorem{thm}{Theorem}[section]
\newtheorem{cor}[thm]{Corollary}
\newtheorem{pro}[thm]{Proposition}
\newtheorem{lem}[thm]{Lemma}
\theoremstyle{definition}
\newtheorem{defn}[thm]{Definition}
\newtheorem{exmp}[thm]{Example}
\newcommand{\zz}{\mathbb{Z}}
\newcommand{\rr}{\mathbb{R}}
\newcommand{\qq}{\mathbb{Q}}
\newcommand{\spec}{\text{spec} \, }
\newcommand{\rarr}{\rightarrow}
\title{ Survey on the Burnside ring of compact Lie groups  }
\author{ Halvard Fausk}  \email{fausk@math.uio.no}
\thanks{The author thanks  Marius Thaule for helpful comments.}
\address{Department of Mathematics,
University of Oslo,
1053 Blindern,
0316 Oslo,
Norway}
\subjclass{Primary 19A22, 22-02}
\begin{document}

\begin{abstract}
The definition and basic properties of the
Burnside ring of    compact Lie groups are presented, with emphasis  on the analogy with the construction of the   Burnside ring of   finite groups.
\end{abstract}\maketitle
The Burnside ring of a finite group encodes the ``calculus of cosets'' of the  group.  It was defined by Burnside in his work on tables of marks of finite groups \cite[page 236]{bur}.
 The Burnside ring of a   compact Lie group was defined  by tom Dieck in the context of  equivariant stable homotopy theory.
 It can also be described as  encoding  the  ``calculus of cosets'', provided  only certain transitive orbits of $G$  are made
 visible. Namely, those transitive orbits, $G/H$, such that $H$ has finite order in its normalizer.
Some  references
are \cite{max}  \cite{bside1} \cite{tdi} \cite{td} \cite{dre} \cite[Chapter V]{lms}.

\section{The Burnside ring of  a finite group}
Let $G$ be a finite group. The Burnside ring of $G$  is the Grothendieck
group completion of the semiring of isomorphism classes of
finite $G$-sets. It is  denoted by $A(G)$ (other notations are   $B(G)$ and  $\Omega (G)$). Addition is given by disjoint union, and
multiplication  by Cartesian product. These operations are  well defined on isomorphism classed  of $G$-sets.
 The Burnside ring
   of $G$  is isomorphic, as an abelian group,   to  the free abelian group with generators  the  isomorphism
  classes of transitive $G$-sets.
Under this identification, the
  multiplication of the additive generators  is given  by the double coset formula. The double coset formula says that  the $G$-set  $G /H \times G / K$  is $G$-isomorphic to the  disjoint union of the transitive $G$-sets $ G / (H \cap  g K g^{-1} )$ where $HgK$ runs over the double coset $ H \,\backslash  G /K$.

Let $H$ be a subgroup of $G$ and let $X$ be a $G$-set. The
$H$-fixed point set  $X^H $ is the subset $\{ x \in X \, | \, hx = x , h\in H \}$ of $X$.
The number of elements in $X^H$, denoted $|X^H|$, only depends on the $G$-isomorphism class of $X$ and  the $G$-conjugacy class of $H$.
For every conjugacy class of a subgroup $H$ of $G$ the map $$  X \mapsto |X^H | $$  gives a
semiring homomorphism from the semiring of isomorphism classes of
finite $G$-sets to the integers. Hence there is an induced $H$-fixed point ring homomorphism $ \phi_H  \colon  A(G) \rarr \zz$.  These
$H$-fixed point ring homomorphisms ensemble to give a
ring homomorphism $$ \phi  \colon  A(G) \rarr \textstyle\prod_{(H)}
\zz , $$ where the product is over the $G$-conjugacy classes, $(H)$, of subgroups $H$ of $G$.

The map $\phi$ is sometimes called the mark homomorphism. Choose a linear ordering  of the conjugacy classes of subgroups of $G$ that respects subconjugacy.
The matrix   with the $(H)$,$(K)$ entry given by  $\phi_K ( [G / H])$ is called the table of marks, or the mark matrix, of $G$.

The basic properties of the Burnside ring of a finite group are described
nicely in the first chapter of \cite{td} and in \cite{dre}. A  recent survey is \cite{bouc}.

\section{Recollections about  compact Lie groups} \label{compactLie}
Compact Lie groups need not be connected, so all finite groups are compact Lie groups. Only closed subgroups of compact Lie groups are considered. The Weyl group $W_G H$ of a subgroup $H$ in
$G$ is  $ N_G H / H $.

 A theorem of Montgomery and Zippin says that for any closed
subgroup $H$ of $G$ there is an open neighborhood $U$ of the
identity element in $G$ such that all subgroups of $H U $ are
subconjugate to $H$ \cite[II.5.6]{bre} \cite{maz}.

Let $H $ and $  K $ be subgroups of $G$. The normalizer $N_G H$
acts from the left on $(G/K)^H$.  Montgomery and Zippin's
theorem implies that the coset $(G/K)^H /N_G H $ is finite
\cite[II.5.7]{bre}.  In particular,  if $W_G H$ is finite, then
$(G/K)^H$ is finite. The Weyl group $W_G K$ acts freely on $
(G/K )^H$ from the right by $gK \cdot nK = gnK$, where $gK \in
(G/K)^H$ and $nK \in W_G K$.  So $ |W_G K|$ divides $| (G/K
)^H|$. The fixed point space $(G/K)^H$ is nonempty if and only if $H $ is subconjugated to $K$ in $G$. Hence, if   $H $ is subconjugated to $ K $ and $H$ has finite Weyl group, then $K$ also has finite Weyl group.

Let $J$ be a subgroup of $G$ and let $X$ be a $G$-space.  A point $x $ in  $  X$ has orbit type $J$ if the stabilizer
 $ \{ g \in G \ | \ g x = x \} $  of $x$ is conjugate to $J$.
 Let $X_{(J)} $ denote
the subspace of  $X$ consisting of the points of $X$ with orbit type
$J$.  Let   $X_{\text{fin}}$
denote the subspace  of  $X$ consisting of all points whose
 stabilizers have
finite Weyl group. If $H$ has finite Weyl group and $X$ is a $G$-space, then $X_{\text{fin}}^H = X^H$ by the results in the previous paragraph.
A $G$-CW-complex is a space built out of  $n$-dimensional  $G$-cells $ S^{n-1}\times G/H \to D^n \times G/H$, for subgroups $H$ of $G$, by  gluing them to cells of dimension $(n-1)$ or lower, for $n \geq 0$  (see \cite[I.3]{ala}  for a precise definition).
 If $X$ is a $G$-CW-complex, then $X_{\text{fin}} $ is a subcomplex of $X$. This follows because the stabilizer of any point
 in $D^n \times G/H$ is  conjugated to $H$, and   cells whose stabilizers have finite Weyl group can only map to other cells whose stabilizers also have finite Weyl group.

\section{Construction of the Burnside ring of  a compact
Lie group}
The basic idea in the definition of the Burnside ring of a compact Lie group $G$ is to consider  finite disjoint unions of $G$-orbits, but  ignore those orbits $G/H$  where  $H$ does not have finite Weyl group in $G$.
 Denote the  $G$-isomorphism class of a $G$-space $X$ by $[X]$.
The set of $G$-isomorphism classes of finite disjoint unions of transitive $G$-spaces, whose orbit types have finite Weyl group,
 has a structure of an abelian  semigroup given by disjoint union.

If $G$ is a compact Lie group, then the Cartesian product of two homogeneous $G$-spaces $G/H$ and $H/K$
is not isomorphic to a disjoint union of homogeneous $G$-spaces. So
  the definition of the product has  to be modified.  If $G$ is a finite group, then a reformulation of  the double coset formula says that
   $$ [G/H ] \cdot
   [G/K] = \Sigma_{(J)} |(G/H \times G/K )_{(J)}/G | \,  [G/J].$$  The following  key observation is due to Schw\"anzl \cite{shw}.

\begin{lem} \label{sch}  Assume that $Z$ is a $G$-space such that $Z^J$ is a finite subspace of $Z$.  Then $ Z_{(J)}/ G$  is  finite. \end{lem}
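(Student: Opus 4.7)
The plan is to produce $Z_{(J)}/G$ as the image of a map from (a subset of) $Z^J$, and then invoke finiteness of $Z^J$. Concretely, I will exhibit every $G$-orbit in $Z_{(J)}$ as containing a point of $Z^J$, so that the composite $Z^J \cap Z_{(J)} \hookrightarrow Z_{(J)} \twoheadrightarrow Z_{(J)}/G$ is surjective. Since the domain sits inside the finite set $Z^J$, this gives $|Z_{(J)}/G| \leq |Z^J| < \infty$.

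The key step is the surjectivity claim. Let $x \in Z_{(J)}$, so by definition the stabilizer $G_x$ is conjugate to $J$, say $G_x = gJg^{-1}$ for some $g \in G$. Then $g^{-1}x$ has stabilizer $g^{-1}G_x g = J$, so in particular $g^{-1}x \in Z^J$; and of course $g^{-1}x$ lies in the same $G$-orbit as $x$, so its image in $Z_{(J)}/G$ equals $[x]$. This shows the orbit map restricted to $Z^J \cap Z_{(J)}$ is surjective onto $Z_{(J)}/G$, which is what we need.

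The only step requiring any care is the identification of $g^{-1}x$ as an element that in fact belongs to $Z_{(J)}$ (not merely to $Z^J$), but this is automatic because orbit type is a $G$-invariant notion, so $Z_{(J)}$ is itself $G$-stable. Thus I do not anticipate any real obstacle; the content of the lemma lies entirely in turning a conjugation on stabilizers into a translation on points, after which finiteness is immediate from the hypothesis on $Z^J$.
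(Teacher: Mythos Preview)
Your proof is correct and is essentially the same argument as the paper's: both show the inequality $|Z_{(J)}/G| \leq |Z^J|$ by observing that every $G$-orbit in $Z_{(J)}$ meets $Z^J$. The paper phrases this as a reduction to the single-orbit case $Z = G/gJg^{-1}$, noting that $(G/gJg^{-1})^J$ is nonempty while $(G/gJg^{-1})/G$ is a point; you phrase it by explicitly translating a point $x$ with stabilizer $gJg^{-1}$ to $g^{-1}x$ with stabilizer $J$, which is exactly why the paper's fixed-point set is nonempty.
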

\begin{proof}  In fact there  is an inequality   $| Z_{(J)}/ G| \leq | Z^J|$. It suffices to check this for $G/gJg^{-1}$.  In this case the claim is true since   $ ( G/ gJg^{-1})/G $ is a point and  $(G/ gJg^{-1} )^J$ is nonempty,  for all $g \in G$.   \end{proof}

In particular, if $X$ and $Y$ are $G$-spaces so
that $ X^J$ and $Y^J$ are finite, then   $ (X \times Y)_{(J)} / G$ is finite. Hence, if $J$ is a subgroup of $G$ with finite Weyl group, and $H$ and $K$ are subgroups of $G$, then $G/H^J$ and $G/K^J$ are finite, and therefore $ (G/ H \times G/K)_{(J)} / G $ is finite.

  Illman has proved that  the product $G/H
\times G/K$ is a finite $G$-CW-complex \cite{ill}.
The $G$-cells of $G/H \times G/K$ with stabilizer a subgroup of $G$ with finite Weyl group are all 0-dimensional by Lemma \ref{sch}. Hence $(G/H \times G/K)_{\text{fin}}$, the subspace of $G/H \times G/K$ obtained by removing the   $G$-cells whose  stabilizers   do not have finite
 Weyl group, is a finite disjoint union of homogeneous
 $G$-spaces.

\begin{defn}
\label{defn:mult}
Define  a product as follows $$ [ G/ H ] \cdot [ G/K ] = [ (G/ H \times  G/K)_{\text{fin}} ] = \Sigma_J \, n_J [G / J],
$$ where the sum is over the
conjugacy classes of subgroups $J$ of $G$
 with finite Weyl group,
 and $n_J $ is the number of elements
 in the finite set $ (G/ H \times  G/K)_{(J)} / G  $
 \cite{shw}.
 \end{defn}
The sum is finite since $ G/H \times G/K$ has only finitely
many orbit types. The isomorphism class of the point, $[G/G]$, is the
multiplicative unit. The multiplication is clearly commutative
and distributive with respect to the addition.
\begin{lem} The   multiplication in Definition \ref{defn:mult}
is associative. \end{lem}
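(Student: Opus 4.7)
The plan is to show that both associations of the triple product equal $[(G/H\times G/K\times G/L)_{\text{fin}}]$ in $A(G)$; by the built-in left--right symmetry of Definition \ref{defn:mult} it is enough to treat the left-associated side $([G/H]\cdot[G/K])\cdot[G/L]$ and then apply the mirror argument.

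First I would verify that $(G/H\times G/K\times G/L)_{\text{fin}}$ is a finite disjoint union of homogeneous $G$-spaces whose isotropy has finite Weyl group, so that it represents a genuine element of $A(G)$. The triple product is a finite $G$-CW-complex by Illman's theorem, so only finitely many orbit types appear. For any $J$ with $W_G J$ finite, the Montgomery--Zippin results recalled in Section \ref{compactLie} make each of $(G/H)^J$, $(G/K)^J$, $(G/L)^J$ finite, hence so is their product $(G/H\times G/K\times G/L)^J$; then Lemma \ref{sch} bounds the number of $G$-orbits of any fixed type $J$.

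Next, writing $(G/H\times G/K)_{\text{fin}}=\bigsqcup_i G/J_i$ and using distributivity together with Definition \ref{defn:mult},
$$([G/H]\cdot[G/K])\cdot[G/L] \;=\; \sum_i [(G/J_i\times G/L)_{\text{fin}}] \;=\; \bigl[((G/H\times G/K)_{\text{fin}}\times G/L)_{\text{fin}}\bigr].$$
So associativity reduces to the $G$-equivariant identification of subspaces of $G/H\times G/K\times G/L$,
$$((G/H\times G/K)_{\text{fin}}\times G/L)_{\text{fin}} \;=\; (G/H\times G/K\times G/L)_{\text{fin}}.$$

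I expect this identification to be the main obstacle. One inclusion is immediate: any triple in the left side already lives in $G/H\times G/K\times G/L$ with finite-Weyl-group stabilizer. For the reverse, let $(x,y,z)$ be a point of the triple product whose stabilizer $J$ has finite Weyl group. Since $J\subseteq \text{Stab}(x,y)$, the consequence of Montgomery--Zippin recorded at the end of Section \ref{compactLie} -- that a subgroup containing one with finite Weyl group again has finite Weyl group -- forces $\text{Stab}(x,y)$ to have finite Weyl group. Thus $(x,y)\in(G/H\times G/K)_{\text{fin}}$, and $(x,y,z)$ lies in the left side. Repeating the calculation for $[G/H]\cdot([G/K]\cdot[G/L])$ then identifies it with the same class $[(G/H\times G/K\times G/L)_{\text{fin}}]$, proving associativity.
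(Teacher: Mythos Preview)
Your proof is correct and follows essentially the same route as the paper: both reduce associativity to the identification $((G/H\times G/K)_{\text{fin}}\times G/L)_{\text{fin}}=(G/H\times G/K\times G/L)_{\text{fin}}$, and both establish it via the fact from Section~\ref{compactLie} that finiteness of the Weyl group passes upward along inclusions. The only cosmetic difference is that the paper phrases the key step contrapositively---arguing that any cell $G/U$ discarded from $G/H\times G/K$ contributes, after crossing with $G/L$, only cells with stabilizer subconjugate to $U$ and hence still of infinite Weyl group---whereas you argue directly from a point with finite-Weyl stabilizer.
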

\begin{proof}  Consider three subgroups   $ H , J $, and $ K $ of $G$ all with finite Weyl groups.
It suffices to show  that $ ((G/H \times G / J)_{\text{fin}} \times G/ K )_{\text{fin}}  $
is equivalent to $ (G/H \times G / J \times G/ K )_{\text{fin}}  $.
Let  $U$ be  a subgroup of $G$ with
infinite Weyl group.  Then  $ G/U \times G/K$
consists of $G$-cells with stabilizers that are subconjugated to $U$. Hence they all have  infinite Weyl
groups (see section \ref{compactLie}).  The result follows.   \end{proof}

\begin{defn} \label{defn:Burnside} Let $G$ be a compact Lie group.
Then the  Burnside ring $A(G)$   is the
 Grothendick construction of the  semiring of isomorphism classes
 of finite disjoint
 unions of homogeneous  $G$-spaces, $G/H$, for closed subgroups $H$ of $G$ with finite
 Weyl group; the sum is given by disjoint union and the multiplication
 is given by Definition \ref{defn:mult}.
 \end{defn}

Recall from section \ref{compactLie} that  $(G/K)^H$  is finite, whenever   $H$ is a subgroup of  $G$ with finite Weyl group.
\begin{lem} \label{fixedpthom} Let $H$ be a subgroup of $G$ with finite Weyl group.
The function $\phi_H  \colon  A (G)  \rarr \zz$, defined  by
$\phi_H (G/K) = | (G /K)^H | $ on the free generators $[G/K]$ of $A(G)$,  is a ring homomorphism. \end{lem}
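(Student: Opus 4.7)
The plan is to decompose the verification into additivity, preservation of the unit, and multiplicativity on basis elements, and then to isolate the only substantive point as the interaction between the ``fin'' truncation in Definition \ref{defn:mult} and $H$-fixed points. Since $A(G)$ is the free abelian group on the classes $[G/K]$ with $K$ of finite Weyl group, specifying $\phi_H$ on these generators automatically extends it uniquely to a group homomorphism $A(G) \to \zz$; the values are finite because $(G/K)^H$ is finite when $H$ has finite Weyl group (Section \ref{compactLie}). Additivity of $|(-)^H|$ on disjoint unions shows that this extension really is $[X] \mapsto |X^H|$ on all of the semigroup, and the unit condition is the trivial computation $\phi_H([G/G]) = |(G/G)^H| = 1$.

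The heart of the proof is multiplicativity, which I would verify on pairs of basis elements and then extend by bilinearity. By Definition \ref{defn:mult},
$$\phi_H\bigl([G/K_1]\cdot[G/K_2]\bigr) \;=\; \bigl|((G/K_1 \times G/K_2)_{\text{fin}})^H\bigr|,$$
while the right-hand side is
$$\phi_H([G/K_1])\,\phi_H([G/K_2]) \;=\; |(G/K_1)^H|\cdot|(G/K_2)^H| \;=\; |(G/K_1 \times G/K_2)^H|,$$
using that $H$-fixed points commute with products. So the identity I need is
$$((G/K_1 \times G/K_2)_{\text{fin}})^H \;=\; (G/K_1 \times G/K_2)^H.$$

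The main (and only) obstacle is exactly this identity, which is the special case of $X_{\text{fin}}^H = X^H$ for $H$ with finite Weyl group noted in Section \ref{compactLie}. I would justify it by observing that any $x \in X^H$ has stabilizer $J$ containing $H$; since $H$ is then subconjugated to $J$ and has finite Weyl group, $J$ has finite Weyl group by the subconjugacy fact in Section \ref{compactLie}, so $x \in X_{\text{fin}}$. The reverse containment is immediate. All remaining ingredients — additivity of $|(-)^H|$ on disjoint unions, commutation of fixed points with products, and the universal property of the Grothendieck construction — are formal.
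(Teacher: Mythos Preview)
Your proposal is correct and follows essentially the same approach as the paper: reduce to a semiring homomorphism, note that additivity and the unit are formal, and then deduce multiplicativity from the key identity $(G/K_1 \times G/K_2)_{\text{fin}}^H = (G/K_1 \times G/K_2)^H$, which holds because $H$ has finite Weyl group (the fact $X_{\text{fin}}^H = X^H$ from Section~\ref{compactLie}). Your write-up is more detailed than the paper's, but the logical skeleton is identical.
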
 \begin{proof}
It suffices to show that $\phi_H$ is a semiring homomorphism before passing  to the Grothendieck construction.
The map is well defined, additive and  respects both the additive and the
multiplicative units. The map  respects the
multiplication  by observing that
$$ \begin{small}   (G / K \times G/ L)_{\text{fin}}^H =  (G / K \times G/ L)^H \end{small} $$ since $H$ has finite Weyl group.
\end{proof}

\section{Other definitions of  Burnside rings} \label{alternatives}
There is a general categorical approach to Burnside rings
\cite{may}. Peter May associates to any symmetric tensor triangulated category (with a skeletally small category of dualizable objects) a Burnside ring.  It is a subring of the ring of selfmaps of the unit object.  The Burnside ring of a compact Lie group is  isomorphic to the Burnside ring associated to the
$G$-equivariant stable homotopy category   (see section \ref{sec:htptheory}).

The following is a  sketch of the main ideas  in tom Dieck's original construction of $A(G)$ for compact Lie groups.  The general categorical definition of May is modeled on this example.
Instead of  disjoint unions of homogeneous  $G$-spaces
 the full subcategory of the stable homotopy category  consisting of the  dualizable objects is used. A spectrum is dualizable if and only if it is a  suspension spectrum, $  \Sigma^{\infty}_G  X$,  of a retract of finite $G$-cell complex $X$. There is a semiring of stable $G$-homotopy classes of such objects.
 (Alternatively,  start out with  all $G$-spaces of the homotopy type of a retract of a finite
$G$-CW-complex. There is a semiring of $G$-homotopy types  of
such spaces; the  sum is given by disjoint union, and the
 product  by  Cartesian
product.) There is a semiring homomorphism into the integers given by Euler characteristic of the (geometric) $H$-fixed point spectra,
for each subgroup $H$ of  $G$.  These maps induces ring homomorphisms from the
 Grothendieck construction of the semiring of stable $G$-homotopy classes of dualizable  objects to the integers.   The Burnside ring is the
  ring  obtained by dividing  out by the intersection of the  kernel ideals of  these homomorphisms for all subgroups $H $ of $G$.  Hence a formal difference of two
  stable $G$-homotopy types  $ \Sigma^{\infty}_G X$ and $ \Sigma^{\infty}_G  Y$ is equal to 0 in the Burnside ring
if and only if the Euler characteristic of the fixed point spaces  $ X^H $ and $ Y^H $
are equal for all closed subgroups $H$ in $G$.
 It is enough
to check this for all subgroups $H$ with finite Weyl groups since
$\chi (X^K) = \chi ( X^H) $, whenever  $ H / K $ is a torus.
The comparison between  the definition of the Burnside ring sketched above and the one  given in Definition  \ref{defn:Burnside} uses the
additivity of the Euler characteristic on stable  cofiber sequences.

The Burnside ring from the perspective of stable equivariant homotopy theory and geometric topology are surveyed in  \cite{ala} \cite{mas}.

\section{Maps between Burnside rings}
Let $ G_1$ and $G_2$ be two compact Lie  groups.
Then there is a natural map $$ p \colon   A ( G_1 )
\otimes A( G_2 ) \rarr
A ( G_1 \times G_2 )  $$
 given by sending $[G_1 / H_1] \otimes [G_2 / H_2 ]$ to $[ G_1 \times G_2 / H_1 \times H_2 ]$.
 The map $p$ is an injective ring map, however it is not an  isomorphisms unless all subgroups of $G_1 \times G_2$, with finite Weyl groups,  are of the form $H_1 \times H_2$, for subgroups $H_1 \leq  G_1$ and $H_2 \leq G_2$. If $G_1 $ and $G_2 $ are finite groups and
$|G_1|$ and $|G_2|$ are relative prime, then $p$  is an
isomorphism.

Let $G$ be a finite group. Then there is a map
 $$ \alpha  \colon A ( \zz / |G|) \rarr A(G) $$ such that  for any subgroup $H$ of $G$ the composite map
 $\phi_H \circ \alpha   $ is equivalent to $\phi_{ \zz / |H| }$ where $\zz / |H|$ is the (unique) cyclic subgroup of $\zz / |G|$ of order $|H|$ \cite{app}.

Let $H$ be a subgroup of a compact Lie group $G$.
The induction map  $\text{ind} \colon A(H) \rarr A(G)$
is given by sending a generator $ [ H /K ] $ to $[ G/ K ] $ if $K$ has finite Weyl group in $G$ and to 0 otherwise.

The restriction map between Burnside rings is most easily described
when the Burnside ring is defined in terms of equivalence  classes of compact $G$-spaces
 (see section \ref{alternatives}).
The restriction map $\text{res} \colon A(G) \rarr A(H)$ is    defined   by sending the isomorphism class of a $G$-space $ X$ to $ X|H$, $X$  regarded as an $H$-space via the inclusion $ H < G$.
Let $L$ be a subgroup of $H$ with finite Weyl group in $H$, then $$ \phi_L \text{res} \,  x = \phi_{L'} x , $$ where $L' $  is an extension in $G$ of $L$ by a torus such that $L'$ has
finite Weyl group in $G$. 
Let $H$ be a normal subgroup of a finite  group $G$. The
restriction map $ A(G) \rarr A(H)$ is give by sending the
isomorphism class of a
$G$-set $G/ L$ to  $$ \frac{|G| |H \cap L|}{|H|  |L|} \,
[ H / H \cap L ] , $$  for any subgroup $L$ of $G$.

The restriction map $A(G) \rarr A( {1} )\cong \zz$ is called the augmentation map. The kernel of this map is called the augmentation ideal. The augmentation map is given by sending $ [G / L] $ to $ | (G/L)^T|$ where $T$ is a maximal torus in $G$.

\section{Examples of Burnside rings of abelian groups}
The only compact Lie groups with no proper subgroups with finite Weyl groups are the trivial group and the tori:   $$ A(1) \cong \zz  $$
$$ A((S^1)^n) \cong \zz ,  $$ for any $n \geq 1$.  If $G$ is a  compact abelian  Lie group, then $G$ is isomorphic to the cartesian product of a torus and a finite abelian group.
Hence if $G$ is a compact  abelian Lie group, then $ A(G) \cong A ( G / G^{\circ})$, where $G^{\circ}$ is the unit component of $G$ and $G / G^{\circ}$ is the group of components of $G$.
A finite abelian group is isomorphic to the cartesian product of $p$-groups. Hence to calculate the Burnside ring of  compact abelian Lie groups it suffices to calculate
 the Burnside ring of  finite abelian
$p$-groups, for primes $p$. They are  of the form
$G = \zz / p^{n_1} \times
\cdots \times \zz / p^{n_m}$ where   $ m \geq 1$ and $ n_i
\geq 1$.

  The multiplication
in the Burnside ring (double coset formula) is particularly
simple for finite abelian groups. It is
 given by  $$  [G / K] \cdot [G / L]  \cong \frac{|G| \, |K \cap L
|}{ |K| \, |L|} \,  [G / K \cap L] ,$$ for subgroups $K$ and $L$ of $G$.
 While it is easy to
find the isomorphism classes of   subgroups of $G$,
 it is more  involved to  keep track of
  all subgroups of $G$, and their intersections.
In the rest of this section
bookkeeping of the subgroups is described in the special case of cyclic $p$-groups $\zz / p^n$ and elementary $p$-groups $(\zz / p)^n$.

  There is an  isomorphisms
 $$A(\zz / p ) \cong \zz [x] / (x^2 - p x) . $$  More generally,   $$ A(\zz / p^n) \cong \zz  [a_1 , \ldots , a_n]
/ a_i a_j  = p^i a_j \ \  (\text{for }  j \geq i ), $$  where $a_i $ denotes
the coset of $\zz / p^n$ by the subgroup  $\zz / p^{n-i}$, for $i= 1, \ldots n $.

 The Burnside ring $A((\zz / p)^2)$ is isomorphic
to
 $$  \zz  [ a_0 , \ldots , a_{p} , b ] /
a_i b= p b, b^2 = p^2 b , a_{i}^2 = p a_i , a_i a_j = b \text{ for }
  i \not= j ,$$
where  $b$ is the coset $(\zz / p)^2$ and $a_i$ is the coset of $(\zz /
p)^2$ by the subgroup  generated by the element $ (1 , i)$ for $  0 \leq i < p$
and the subgroup generated by $ (0 , 1)$ when $ i = p$.

To bookkeep  the subgroups of $(\zz / p)^n$ and their intersections the  tactic  is  to
 associate to any  subgroup  $H$ of $(\zz / p)^n$ a
 distinguished
set of generators of $H$.
This gives a
systematic description of the subgroups of $(\zz / p)^n$ and their intersections.

Fix $n$ and let $H$ be a subgroup of $(\zz / p)^n$.  There is a tuple of integers (with $m \leq n$)
$$ n \geq i_1 > i_2 > \cdots > i_m \geq 1  $$ and elements
$\alpha_1 , \ldots , \alpha_m \in (\zz / p)^n$ such  that
$\alpha_{k}^{i_k} = 1$,
$ \alpha_{k}^{j} = 0 $ for $j > i_k $,
 and $ \alpha_{k}^{i_l} = 0 $ whenever  $ l
\not= k$. The superscript $j$ denotes the $j$th coordinate in $(\zz / p)^n$.
 The subgroup $H$ is the subgroup of $(\zz / p)^n$ generated by
the elements $\alpha_1 , \ldots , \alpha_m$.
The elements are linearly independent and $H$
 is isomorphic to $(\zz / p )^m$.
 There is exactly one such set of distinguished  generators that
  generates  $H$.
It is illustrative to write the generators  in the form of an $m
\times n$-matrix with values in $\zz / p $.
  The following is an example when the rank of the subgroup is
$m = 3$
$$ \left(
  \begin{array}{cccccccccccccc}
    0 & \cdots & 0 & 1 & * & \cdots & * & 0 & * & \cdots & * & 0 & *&
     \cdots   \\
    0 & \cdots & \cdots & \cdots & \cdots & \cdots & 0 & 1 & * &
    \cdots & * & 0 & * & \cdots
      \\ 0 & \cdots & \cdots & \cdots & \cdots & \cdots & \cdots &
      \cdots & \cdots  & \cdots & 0 & 1 & * & \cdots
  \end{array}
\right)  $$

The intersection of a subgroup given by
$$ n \geq i_1 > i_2 > \cdots > i_m \geq 1  $$ and elements
$\alpha_1 , \ldots , \alpha_m \in (\zz / p)^n$ by  another
subgroup given by $$ n \geq i'_1 > i'_2 > \cdots > i'_{m'} \geq 1  $$
and elements
$\alpha'_1 , \ldots , \alpha'_{m'} \in (\zz / p)^n$ is the
subgroup given  by  the generators
$$ n \geq j_1 > j_2 > \cdots > j_s \geq 1  $$ and elements
$\beta_1 , \ldots , \beta_s \in (\zz / p)^n$ such  that
the set of tuples   $ \{ (j_k , \beta_k ) \}_{k=1 , \ldots , s} $ equals the intersection of the sets of tuples
$ \{ (i_k , \alpha_k ) \}_{k=1, \ldots , m}$  and
$ \{ (i'_k , \alpha'_k ) \}_{k=1, \ldots ,m'}$.

\section{Examples of Burnside rings of  nonabelian groups}
If $H$ and $K$ are subgroups of $G$ with finite Weyl groups, and $H$ is normal in $G$, then
$$  [ G/H ] \cdot [G/K] = \frac{|G/H| \, | (G/K)^{K \cap H} |}{|W_G (K \cap H) |} \, [ G / K \cap H ] , $$ if $ K \cap H$   has finite Weyl group in $G$, and
$ [ G/H ] \cdot [G/K] = 0 $  if $ K \cap H$ does not have finite Weyl group in $G$.

Let $G$ be the permutation group $\Sigma_3$.  It is isomorphic to the semidirect product $ \zz / 3 \rtimes \zz / 2$.
The isomorphism classes of transitive $G$-sets are: $[G /G ]$, $ a = [ G / \, \zz /2 ]$, $b = [ G / \, \zz / 3 ]$, and $c = [G / 1 ]$.
  The Burnside ring $A(G)$ is isomorphic to the polynomial ring $\zz [ a , b, c ] /  \thicksim$, where the relations are
   $ a^2 = a +c$, $b^2 = 2 b$, $ c^2 = 6 c$, $ab = c$, $ a c = 3 c$, and $bc = 2 c$.

Let $G$ be the nontrivial semidirect product $S^1 \rtimes \zz / 2$ (also known as $\text{O}(2)$).  The subgroups of $G$ with finite Weyl groups are $ G$, $S^1 \rtimes 0$, and $ \zz /n \rtimes \zz / 2$ for $n \geq 1$.  The normalizers are $ G$, $G$, and $   \zz /2n \rtimes \zz / 2$ for $n \geq 1$, respectively. Let  $y$ denote the element $ [G /\, S^1 \rtimes 0]$, and let $x_n$ denote $[ G /( \zz /n \rtimes \zz / 2)]$, for $n \geq 1$.
Let $\sim$ be the relations generated by  $ y \cdot y = 2 y$, $ x_n \cdot x_m = 2x_{(n,m)}$, for $m,n \geq 1$, and   $ x_n \cdot y = 0$, for $n \geq 1$, where $(m,n) $ is the greatest common divisor of $m$ and $n$. Then there is an isomorphism
$$ A (S^1 \rtimes \zz / 2 ) \cong \zz [ y ,x_1 ,x_2 ,x_3 , ...]\,  / \sim . $$
The Burnside ring  of $ \text{SU}(3)$ is described in great detail in \cite[5.14]{td}.

Computer programs facilitate the calculation of the table of marks and the Burnside ring for many  groups.
See for example \cite{pfei}.

\section{The mark homomorphism $ \phi : \mathbf{A(G)} \rightarrow \mathbf{C (G) }$} \label{sect:mark}
 Let $\mathcal{C} G $ 
be the space of closed subgroups of $G$ with the Hausdorff
topology from $G$. This is a compact metric space. Let $ \Psi G$ be
the quotient space of $\mathcal{C} $ obtained by identifying
$G$-conjugate subgroups  of $G$. The space  $ \Psi G$ is  countable, hence it is a totally disconnected space \cite{tdf}.

Let $ \Phi G$ be the
subspace of $ \Psi G$ consisting of conjugacy classes of
closed subgroups of $G$ with finite Weyl group.
By Montgomery and   Zippin's theorem the complement $\Psi G - \Phi G $ is open.
So  $\Phi G$ is a closed subspace of $\Psi G$, hence a compact space.

There is a continuous retract map $ \omega  \colon  \Psi G
\rarr \Phi G $     given by sending $ (H) $ to the conjugacy class
of a largest possible  extension of $H$ by a torus \cite[1.2]{fol}. This extension
is unique up to
conjugation. The following result gives a useful description of $\omega$ \cite[2.2]{fol}.
\begin{lem} The conjugacy class
$\omega (H) $ is the conjugacy class of the subgroup generated
by $(H)$ and a maximal torus in the centralizer $ C_G H$.
\end{lem}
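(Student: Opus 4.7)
The plan is to verify two complementary claims: first, that $L := H \cdot T$, with $T$ a maximal torus of $C_G H$, is itself an extension of $H$ by a torus; second, that every closed subgroup $K \supseteq H$ with $K/H$ a torus is conjugate in $G$ to a subgroup of $L$.

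The first claim is immediate. Since $T \subseteq C_G H$, the product $HT$ is a closed subgroup of $N_G H$ in which $H$ is normal, and
\[ HT/H \;\cong\; T/(T \cap H) \]
is a continuous surjective image of the torus $T$, hence itself a torus.

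For the second claim, I would first argue that the conjugation homomorphism $K \to \mathrm{Aut}(H)$ factors through $\mathrm{Inn}(H)$: it induces a map $K/H \to \mathrm{Out}(H) = \mathrm{Aut}(H)/\mathrm{Inn}(H)$, and since $K/H$ is a connected torus while $\mathrm{Out}(H)$ is discrete for any compact Lie $H$, this induced map must be trivial. Consequently $K \subseteq H \cdot C_G H$, so $K/H$ embeds as a torus in $C_G H / Z(H)$ with $Z(H) = H \cap C_G H$. By the standard conjugacy of maximal tori in a compact Lie group, $K/H$ is conjugate (via an element of $C_G H$) to a subtorus of the image of some maximal torus $T \subseteq C_G H$, so $K$ itself is conjugate in $G$ to a subgroup of $HT$. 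Choosing $K/H$ to be a maximal torus of $C_G H / Z(H)$ then shows that $HT$ is a largest such extension, giving $\omega(H) = [HT]$.

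The main obstacle is the discreteness of $\mathrm{Out}(H)$. For connected $H$ this follows from the classical fact $\mathrm{Aut}(H)^\circ = \mathrm{Inn}(H) \cong H/Z(H)$. The general case reduces by restricting automorphisms to $H^\circ$: the connectedness of $\mathrm{Aut}(H)^\circ$ forces its image in $\mathrm{Aut}(H^\circ)$ to lie in $\mathrm{Inn}(H^\circ)$, while automorphisms trivial on $H^\circ$ (parameterized by $1$-cocycles $\pi_0(H)\to C_H(H^\circ)$) are themselves inner up to finite index, so $\mathrm{Out}(H)^\circ$ is trivial.
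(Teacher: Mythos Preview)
The paper does not supply a proof of this lemma; it simply cites \cite[2.2]{fol}. So there is no in-paper argument to compare against, and your proposal must stand on its own.

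Your two-step strategy is the natural one and is essentially correct. Step 1 is routine. In Step 2 the crucial move is the inclusion $K \subseteq H \cdot C_G H$, which you correctly reduce to the vanishing of the composite $K/H \to \mathrm{Out}(H)$; once that is in hand, the passage to $C_G H/Z(H)$ and the appeal to conjugacy of maximal tori go through cleanly (note that the image of a maximal torus of $C_G H$ in $C_G H/Z(H)$ is again maximal, which you use implicitly).

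The only soft spot is the justification that $\mathrm{Out}(H)$ is totally disconnected for a general compact Lie group $H$. The statement is true and standard, but your sketch is imprecise in two places. First, automorphisms trivial on $H^\circ$ have cocycle values in $C_H(H^\circ)$, but those that are \emph{also} trivial on $\pi_0(H)$ (the only ones relevant once you have used connectedness of $\mathrm{Aut}(H)^\circ$) actually land in $Z(H^\circ)$; so the correct cocycle target is $Z(H^\circ)$, and the resulting group is $Z^1(\pi_0 H, Z(H^\circ))$. Second, ``inner up to finite index'' should be unpacked: the inner automorphisms coming from $Z(H^\circ)$ give exactly the coboundaries $B^1$, and $H^1(\pi_0 H, Z(H^\circ))$ is finite (finite group cohomology with coefficients in a compact abelian Lie group). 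Combining this with the fact that $\mathrm{Aut}(H)^\circ/(\mathrm{Aut}(H)^\circ \cap \mathrm{Inn}(H))$ is simultaneously connected and finite forces it to be trivial. With those details filled in, the argument is complete.
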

 Let $C(G)$ denote the ring of continuous functions from $\Phi
G$ to the integers $\zz$.  Recall Lemma \ref{fixedpthom}.
\begin{lem} The homomorphisms $\phi_H$, for $H \leq G$,
 ensemble to give a ring
  homomorphism
$$ \phi  \colon  A(G) \rarr C(G) .$$
\end{lem}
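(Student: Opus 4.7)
Since Lemma \ref{fixedpthom} already shows that each component $\phi_H\colon A(G)\to\zz$ is a ring homomorphism, the assembled map into the product ring $\prod_{(H)\in\Phi G}\zz$ is automatically a ring homomorphism; the substantive claim is that its values land in the subring $C(G)$ of continuous functions. Because $\Phi G$ is a closed subspace of the countable, hence totally disconnected, space $\Psi G$, continuity into the discrete target $\zz$ amounts to local constancy. By $\zz$-linearity of $A(G)$ on its free generators, it suffices to prove, for each closed subgroup $K$ of $G$ with finite Weyl group, that $(H)\mapsto|(G/K)^H|$ is locally constant on $\Phi G$.

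Fix $(H_0)\in\Phi G$. I would apply the Montgomery--Zippin theorem to $H_0$ to obtain an open neighborhood $U$ of $e\in G$ such that every closed subgroup of $H_0 U$ is subconjugate to $H_0$. The set $\{H\in\mathcal{C}G\colon H\subseteq H_0 U\}$ is an open neighborhood of $H_0$ in $\mathcal{C}G$, and its image under the conjugation quotient is a neighborhood $V$ of $(H_0)$ in $\Psi G$. For each $(H)\in V$, pick a representative in this set so that $H\leq g H_0 g^{-1}$ for some $g\in G$; left translation $xK\mapsto g^{-1}xK$ provides a bijection $(G/K)^{g H_0 g^{-1}}\to (G/K)^{H_0}$, while the inclusion $(G/K)^{gH_0g^{-1}}\subseteq(G/K)^H$ gives the easy direction $|(G/K)^H|\geq|(G/K)^{H_0}|$.

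The main obstacle is the reverse inequality. Equivalently, I would shrink $V$ so that every $H\in V\cap\Phi G$ subconjugate to $H_0$ is in fact conjugate to it; then the inclusion becomes an equality of fixed-point sets. After conjugating one may assume $H\leq H_0$, and the problem is to show that a closed subgroup $H\leq H_0$ with finite Weyl group in $G$, sufficiently close to $H_0$ in the Hausdorff topology, must equal $H_0$. I would split by dimension: when $\dim H=\dim H_0$ one has $H^{\circ}=H_0^{\circ}$ and the components of $H_0$ missing from $H$ lie at positive Hausdorff distance from $H$, so shrinking $V$ eliminates them. The delicate case is $\dim H<\dim H_0$, which I would rule out by exploiting that $W_G(H)$ finite forces $\dim N_G(H)=\dim H$; a positive-dimensional subtorus of $H_0$ centralizing $H$ would otherwise enlarge $N_G(H)$ past dimension $\dim H$, and a packing argument then prevents Hausdorff approximation of $H_0$ by such lower-dimensional $H$. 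Harnessing the finite--Weyl--group hypothesis on $H$ (rather than only on $H_0$) to exclude these dense low-dimensional approximants is, I expect, the single genuinely delicate point of the argument.
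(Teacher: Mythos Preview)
Your plan has a genuine gap at the ``reverse inequality'' step. You propose to shrink $V$ so that every $(H)\in V\cap\Phi G$ subconjugate to $H_0$ is actually conjugate to $H_0$; equivalently, that $(H_0)$ is isolated among those $(H)\in\Phi G$ with $(H)\leq(H_0)$. This is false, and the finite--Weyl--group hypothesis on $H$ does not save it. Take $G=S^1\rtimes\zz/2$ (that is, $\mathrm{O}(2)$) and $H_0=G$. The dihedral subgroups $D_n=\zz/n\rtimes\zz/2$ satisfy $N_G D_n=D_{2n}$, so $|W_G D_n|=2$ and $(D_n)\in\Phi G$; each $D_n$ is a proper subgroup of $G$; and $D_n\to G$ in the Hausdorff metric as $n\to\infty$. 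Hence $(G)$ is a genuine accumulation point in $\Phi G$ of classes properly subconjugate to it. Your packing heuristic does not apply here: there is no positive-dimensional torus of $G$ centralizing $D_n$ (the $S^1$-centralizer of any reflection is $\{\pm1\}$), yet the $D_n$ still Hausdorff-approximate $G$.

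Continuity of $(H)\mapsto|(G/K)^H|$ therefore cannot be deduced from local discreteness of $\Phi G$; the function is locally constant for a different reason. The paper argues via the $H_0$-orbit structure of $G/K$: the restricted $H_0$-manifold $(G/K)|H_0$ has only finitely many isotropy types, and if $H\leq H_0$ is not $H_0$-subconjugate to any \emph{proper} isotropy subgroup of $(G/K)|H_0$, then $(G/K)^{H}=(G/K)^{H_0}$, since any extra $H$-fixed point would have $H_0$-stabilizer strictly between $H$ and $H_0$. Those finitely many proper isotropy types sit at positive distance from $(H_0)$ in $\Psi G$, so after Montgomery--Zippin one gets the required neighborhood. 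In the $\mathrm{O}(2)$ example this is precisely why $|(G/K)^{D_n}|=|(G/K)^G|$ for all large $n$ even though $(D_n)\neq(G)$.
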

\begin{proof}
It suffices to show  that the map $(H) \mapsto |(G/K)^H |$ is
continuous for each $ (K)$ in $\Phi G$. Let $ (H_i)$ be a sequence converging to $H$.
Montgomery and Zippin's theorem implies that there is no loss of generality in assuming that  $ H_i
<H$. As an $H$-manifold,  $(G/K)|H$,  has finitely many isotropy  types \cite{ill}.
  The fixed point space  $(G /K)^{H} $ is equal to $  (G /K)^{H_i}$ whenever $H_i$ is not subconjugated, in $H$,
  to any of the isotropy groups of  $(G/K)|H$   that are properly contained in $H$.

Since  $(G/K)|H$ has finitely many orbit types
there is an $\epsilon >0$ such that the distance, in the metric space $\Psi G$,
between $(H)$ and each of the $G$-conjugacy classes of
the  $H$-isotropy groups of  $(G/K)|H$, different from $(H)$, are all greater or equal to $\epsilon$.
Hence if the distance between $(H)$ and $(H_i)$ is less than $\epsilon$, then $|(G /K)^{H}| = | (G /K)^{H_i}|$.
\end{proof}

\begin{lem} \label{lem:phiinj}
The mark homomorphism  $\phi$ is an injective
ring map. \end{lem}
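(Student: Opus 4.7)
The plan is to exploit the upper-triangular structure of the mark matrix with respect to the subconjugacy partial order, exactly as in the finite group case. By construction, $A(G)$ is, as an abelian group, free on the conjugacy classes $(K)$ of closed subgroups $K$ of $G$ with finite Weyl group, so any element can be written uniquely as a finite sum $x = \sum_{(K)} n_K [G/K]$ with $n_K \in \zz$.

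Suppose $x \neq 0$, and let $S$ be the (finite) set of conjugacy classes $(K)$ with $n_K \neq 0$. I would then choose $(K_0) \in S$ that is maximal with respect to the subconjugacy order on $S$, which exists since $S$ is finite and since subconjugacy is antisymmetric on conjugacy classes (two mutually subconjugate closed subgroups of a compact Lie group have the same dimension and same number of components, hence are conjugate). The key computational inputs, both already recorded in section \ref{compactLie}, are:
\begin{itemize}
\item $(G/K)^{K_0}$ is nonempty if and only if $K_0$ is subconjugate to $K$;
\item for $K = K_0$ one has $(G/K_0)^{K_0} = N_G(K_0)/K_0 = W_G K_0$, a finite set of positive cardinality.
\end{itemize}

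Applying $\phi_{K_0}$ to $x$ gives
$$ \phi_{K_0}(x) = \sum_{(K) \in S} n_K \, |(G/K)^{K_0}|. $$
By the first bullet, any term with $(K) \neq (K_0)$ that is nonzero forces $K_0$ to be strictly subconjugate to $K$, contradicting the maximality of $(K_0)$ in $S$. Hence only the $(K_0)$-term survives, so by the second bullet
$$ \phi_{K_0}(x) = n_{K_0} \, |W_G K_0| \neq 0, $$
which shows $\phi(x) \neq 0$ and proves injectivity.

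There is essentially no hard step; the only point requiring a moment of care is the antisymmetry of subconjugacy on conjugacy classes (so that "maximal" behaves correctly), which is a standard consequence of the fact that for compact Lie groups an inclusion $g^{-1}Kg \subseteq K$ forces equality. Everything else is already supplied by Lemma \ref{fixedpthom} and the recollections of section \ref{compactLie}.
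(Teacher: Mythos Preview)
Your proof is correct and follows essentially the same approach as the paper's own proof: both pick a conjugacy class $(K_0)$ that is maximal among those appearing with nonzero coefficient and evaluate $\phi$ at $(K_0)$ to obtain the nonzero value $n_{K_0}\,|W_G K_0|$. You are simply more explicit than the paper about why only one term survives and about the antisymmetry of subconjugacy needed for ``maximal'' to make sense.
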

\begin{proof}
Assume that
$$ \textstyle\sum_{ i=1}^{n} q_i \, \phi ([G/H_i ])  =0  $$ where all $ q_i \in \zz$ (or in $\qq $)
are nonzero. Let $ (H_m )$ be a maximal conjugacy class among
 $ (H_1  ), \ldots , (H_n )  $. The function evaluated at $ (H_m)$ is  $ q_m |W_{G}
{H_m} | =0 $. This gives a contradiction. \end{proof}

   If  $G$ is a finite group and  $\phi$ is a
surjective map, then $ G $ is the trivial group.

\section{The mark  homomorphism   $\phi$ is a rational isomorphism}
  The topology on $ \Phi G$  can be  described in a way which
  makes it clear that the functions $ \phi ( [G/H]  )$, for $(H) \in \Phi G$,   generate
  $ C(G)\otimes_{\zz} \qq $ as a $\qq$-module.
\begin{lem} The topology on  $ \Phi G $ is the smallest topology
such that $$ V (K) = \{ (H) \in \Psi G \ | \ (H) \leq (K) \}
$$ is both an open and a closed subset of $\Phi G$ for all $
(K) \in \Phi G $.
\end{lem}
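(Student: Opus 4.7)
Write $\tau$ for the given topology on $\Phi G$ inherited from $\Psi G$, and $\tau'$ for the topology generated by the subbasis $\{V(K),\,\Phi G\setminus V(K):(K)\in\Phi G\}$. The plan is to show $\tau=\tau'$. Since $\tau'$ makes every $V(K)$ clopen by construction, the job splits into three tasks: openness of $V(K)$ in $\tau$, closedness of $V(K)$ in $\tau$ (together giving $\tau'\subseteq\tau$), and Hausdorffness of $\tau'$ (which upgrades the inclusion to equality).

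For openness in $\tau$ I would apply Montgomery and Zippin's theorem to any $(H)\in V(K)$: the neighborhood $HU$ of $H$ in $G$ supplied by the theorem contains only closed subgroups subconjugate to $H$, hence to $K$, so every closed subgroup of $G$ sufficiently close to $H$ in the Hausdorff metric on $\mathcal{C}G$ is subconjugate to $K$; descending to $\Psi G$ gives an open neighborhood of $(H)$ inside $V(K)$. For closedness in $\tau$ I would push the closed subset $\{L\in\mathcal{C}G:L\le K\}$ of $\mathcal{C}G$ forward under the continuous conjugation map $(g,L)\mapsto gLg^{-1}$; the restricted map has compact domain, so its image — the collection of closed subgroups of $G$ subconjugate to $K$ — is compact and hence closed in $\mathcal{C}G$, and it descends to $V(K)$ in $\Psi G$.

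For $\tau\subseteq\tau'$ I would invoke the compact-to-Hausdorff principle: $\tau$ is compact by the previous section, so if $\tau'$ is Hausdorff then the identity map $(\Phi G,\tau)\to(\Phi G,\tau')$ is a continuous bijection from a compact space to a Hausdorff space, hence a homeomorphism. Hausdorffness of $\tau'$ amounts to separation of distinct points $(H)\ne(K)$ of $\Phi G$ by the clopen sets $V(L)$, which follows from antisymmetry of subconjugacy on $\Phi G$: a mutual subconjugacy $H\le gKg^{-1}$, $K\le hHh^{-1}$ yields $H\le fHf^{-1}$ with $f=gh$, and iterating this and comparing dimensions and component counts forces $H=fHf^{-1}$ and then $H=gKg^{-1}$. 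Once antisymmetry is in place, at least one of $(H)\not\le(K)$ or $(K)\not\le(H)$ holds, and the corresponding $V$ together with its complement separates the two points.

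The step I expect to require the most care is the closedness argument: it depends on the Hausdorff topology on $\mathcal{C}G$ making conjugation continuous and the set of subgroups of $K$ closed, facts that are standard but worth verifying carefully. Antisymmetry of subconjugacy is the other subtle point, but once the iteration $H\le fHf^{-1}\le f^2Hf^{-2}\le\cdots$ is written out the dimension-and-components comparison is routine.
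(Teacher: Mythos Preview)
Your argument is correct, and the overall strategy---show each $V(K)$ is clopen in the given topology $\tau$, then show the generated topology $\tau'$ is Hausdorff and invoke the compact-to-Hausdorff rigidity principle---is a genuinely different route from the paper's. The paper proves clopenness of $V(K)$ in essentially the same way you do, but for the reverse inclusion $\tau\subseteq\tau'$ it argues constructively: it first uses the countability of $\Phi G$ (a nontrivial fact cited from tom Dieck) to obtain a basis of clopen sets for $\tau$, and then shows that any clopen neighbourhood $U$ of $(K)$ can be refined to one of the form $V(K)\setminus\bigcup_{i=1}^n V(H_i)$ by covering the compact set $V(K)\setminus U$ with finitely many $V(H_i)$. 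Your approach is cleaner in that it avoids the countability input entirely and isolates the essential combinatorial fact (antisymmetry of subconjugacy on closed subgroups) as the reason $\tau'$ separates points; the paper's approach, on the other hand, yields the concrete bonus that the sets $V(K)\setminus\bigcup_i V(H_i)$ form a neighbourhood basis, which feeds directly into the proof of Proposition~\ref{rational}. Note, incidentally, that the paper's argument also tacitly uses antisymmetry (to ensure $(K)\notin V(H_i)$ when $(H_i)$ is properly subconjugate to $K$), so your explicit verification of it is not wasted effort.
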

\begin{proof}  The definition of the Hausdorff topology shows that $ V (K) $
is closed. By Montgomery and Zippin's theorem   $ V (K) $  is also open for all $ (K) \in \Phi G $.

Since $\Phi G$ is a countable metric space it has a basis for
the topology consisting of open and closed sets. (For each element $x$ the function $d ( x , - ) $ is a continuous function to $\rr$.) Let $(K)$ be
in $ \Phi G$. Let $U $ be an open and closed neighborhood of
$(K)$ in $ \Phi G$. Since $V(K)$ is open and closed there is no loss of generality assuming  that $U$ lies inside $ V
(K)$. The set $ V(K) - U$ is open and closed. The collection
of sets $ V (H) $,  for all $(H)$  in $ V(K) - U$,  is an open
covering of the closed subspace $ V(K) - U$ of $\Phi G$.
 Since $\Phi G$ is a compact space
  there is a finite set $\{ ( H_1 )  , ( H_2 )  , \ldots , ( H_n )
 \}$ such that each $H_i $
is properly subconjugated to $K$ in $G$ and
$$  V(K) - U \subset \cup_{i=1}^{n} V (H_i )  .$$
Then $ V (K) - \cup_{i=1}^{n} V (H_i ) $ is an open and closed
neighborhood of $(K)$ contained in $U$. Hence $ V(K)$ and its complement $ \Phi  G - V(K)$, for $(K) \in \Phi G $, generates the topology on $\Phi G$.
\end{proof}
\begin{pro} \label{rational}
The map $$ \phi \otimes_{\zz} \qq  \colon  A(G) \otimes_{\zz} \qq \rarr
C(G) \otimes_{\zz} \qq $$ is an isomorphism.
\end{pro}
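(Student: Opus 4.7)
Lemma \ref{lem:phiinj} applied with rational coefficients gives injectivity. For surjectivity, note that $\Phi G$ is compact and totally disconnected, so $C(G) \otimes \qq$ is spanned as a $\qq$-vector space by characteristic functions of clopen subsets of $\Phi G$. By the previous lemma every clopen set is a finite Boolean combination of the sets $V(K)$, so its characteristic function is an integer polynomial in the $\chi_{V(K)}$'s via $\chi_{A^c} = 1 - \chi_A$ and $\chi_{A \cap B} = \chi_A \chi_B$. It therefore suffices to show $\chi_{V(K)} \in \text{Im}(\phi \otimes \qq)$ for every $(K) \in \Phi G$, and I will in fact prove the stronger statement that every $f \in C(G) \otimes \qq$ supported on $V(K)$ lies in $\text{Im}(\phi \otimes \qq)$.

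The plan is a well-founded induction on $(K) \in \Phi G$ under proper subconjugacy. This order is well-founded because proper subconjugacy strictly decreases the pair $(\dim H, |\pi_0 H|)$ in lexicographic order, and these are non-negative integers. For the base case, a minimal $(K) \in \Phi G$ has $V(K) = \{(K)\}$, and $\phi([G/K]) = |W_G K| \chi_{\{(K)\}}$ by Lemma \ref{fixedpthom}; since $|W_G K|$ is a nonzero integer, any rational multiple of $\chi_{\{(K)\}}$ lies in $\text{Im}(\phi \otimes \qq)$.

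For the inductive step, given $f$ supported on $V(K)$, set $g = f - \frac{f((K))}{|W_G K|} \phi([G/K])$, so that $g((K)) = 0$. Since $g$ is locally constant, $g^{-1}(0)$ is a clopen neighborhood of $(K)$, whence the support of $g$ is a compact subset of $V(K) - \{(K)\} = \bigcup_{(H) < (K)} V(H)$. Compactness yields finitely many properly subconjugate classes $(H_1), \ldots, (H_r)$ with $V(H_1) \cup \cdots \cup V(H_r)$ covering the support of $g$. Putting $W_i = V(H_i) - \bigcup_{j < i} V(H_j)$ gives pairwise disjoint clopens, and the decomposition $g = \sum_i g \, \chi_{W_i}$ writes $g$ as a sum of elements of $C(G) \otimes \qq$ each supported on some $V(H_i)$. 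The inductive hypothesis places each summand in $\text{Im}(\phi \otimes \qq)$, hence so do $g$ and $f$. Applied to $(K) = (G)$, for which $V(G) = \Phi G$, this gives surjectivity.

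The main technical step is the compactness reduction producing the finite subcover $V(H_1), \ldots, V(H_r)$; everything else is routine bookkeeping around the well-founded induction, the key points being that $\phi([G/K])$ hits $\chi_{\{(K)\}}$ with the nonzero scalar $|W_G K|$ and that support strictly below $(K)$ can be resolved inductively.
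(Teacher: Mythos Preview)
Your proof is correct, but the route differs from the paper's. The paper also reduces to showing each $\chi_{V(H)}$ lies in the rational image, but then argues directly for each $(H)$ without induction: since $\phi([G/H])$ is a continuous integer-valued function on the compact space $\Phi G$, it takes only finitely many values, and these are all multiples of $|W_G H|$. The level sets $U(H,n)=\{(K):|(G/H)^K|=n|W_G H|\}$ form a finite partition of $\Phi G$, and by polynomial interpolation in the powers $\phi([G/H])^k$ one obtains each $\chi_{U(H,n)}$ as a rational linear combination of elements of $\phi(A(G))$; summing over $n\geq 1$ yields $\chi_{V(H)}$.

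Your argument instead proves the stronger statement that every function supported on $V(K)$ lies in the image, by well-founded induction on subconjugacy: subtract a multiple of $\phi([G/K])$ to kill the value at $(K)$, then use compactness to push the remaining support onto finitely many strictly smaller $V(H_i)$. This uses only the additive structure of $A(G)\otimes\qq$ together with the well-founded order and the clopenness of the $V(H)$, whereas the paper exploits the ring multiplication (powers of a single $\phi([G/H])$) and needs no induction. Your approach is closer in spirit to the triangular argument underlying Proposition~\ref{additivebasis}, and the compactness step you isolate is essentially the same one used in the proof of the preceding topology lemma; the paper's approach is more self-contained for each fixed $(H)$.
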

\begin{proof} It suffices to  prove that the characteristic function on each of
the open and closed subsets $ V (H)$ for $ (H) \in \Phi G$ are in the
rational image of $\phi$.
   Since $W_G H$  acts freely from the right on
 $  (G/H)^ K $ the values of $\phi ([G /H] ) (K) = | G/H^K | $ are
 multiples of $|W_G H |$.

Let $ V (H ,n )  $ be $ \{ (K) \ | \ | G/H^K | \geq n |W_G H
|\} $. Then  $ V ( H , 1) $ equals $ V (H )$. Define subsets
$$ U (H ,n ) = \{ (K) \ | \ | G/H^K | = n |W_G H | \} = V (H
,n ) - V(H , n+1 ) .$$ Since $\phi ([G /H ])$ is a continuous
function on a compact set, only finitely many of the $ U (H ,n
)$  are nonempty. By considering linear combinations of different  powers of
 $\phi ([G /H ])$  the characteristic function on  $ U (H ,n ) $
 is  in the rational image of $ \phi $, for each $n$.  Hence so is  the characteristic
 function on
 $V (H ) = \cup_{n \geq 1}  U (H ,n ) $.
\end{proof} 

\section{An alternative additive basis for $\mathbf{C(G)}$} Assume that $  H$ has  a finite Weyl group. Since
$W_G H$ acts freely on $ (G/H)^K$ it follows that  $\phi_{K} ([G/H]) $ is divisible by $ | W_G H|$ for all $(K)$.
 Hence the function
  $$ a_H = \frac{1}{  | W_G H |}   \phi ([G/H]) $$ is in $C(G)$. It is not possible to divide any further since  $ a_H (H) =1 $.
\begin{pro} \label{additivebasis}
The elements $ a_H$, for $(H) \in \Phi G$,  are linearly independent and generates
$C(G)$ as an abelian group.
\end{pro}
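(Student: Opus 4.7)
My plan is to exploit the triangular structure of the functions $a_H$ with respect to the subconjugacy order on $\Phi G$. First I would record the key values: $a_H(H) = 1$ (because $W_G H$ acts freely on the nonempty $H$-fixed set $(G/H)^H$, forcing $|(G/H)^H| = |W_G H|$), and $a_H(K) = 0$ whenever $(K) \not\leq (H)$ in subconjugacy (since then $(G/H)^K$ is empty, as recalled in Section \ref{compactLie}). Thus $a_H$ is supported on the set $V(H) = \{(K) : (K) \leq (H)\}$ and takes value $1$ at the top class $(H)$.

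For linear independence I would essentially repeat the argument of Lemma \ref{lem:phiinj}. Suppose $\sum_{i=1}^{n} q_i\, a_{H_i} = 0$ is a nontrivial finite $\zz$-linear relation, with all $q_i \neq 0$ and the $(H_i)$ distinct. Choose $(H_m)$ maximal among the $(H_i)$ in the subconjugacy order. Evaluating at $(H_m)$ annihilates every term with $(H_i) \neq (H_m)$: indeed $a_{H_i}(H_m) \neq 0$ would force $(H_m) \leq (H_i)$, contradicting the maximality of $(H_m)$. The only surviving contribution is $q_m\, a_{H_m}(H_m) = q_m$, so $q_m = 0$, a contradiction.

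For the generation statement I would invoke Proposition \ref{rational}: any $f \in C(G)$ can be written as a finite rational combination $f = \sum_{i=1}^{n} q_i\, a_{H_i}$ with $q_i \in \qq$ and the $(H_i)$ distinct; the claim is that each $q_i$ is actually in $\zz$, which I would prove by induction on $n$. Again choose $(H_m)$ maximal among the $(H_i)$. Evaluating $f$ at $(H_m)$ and invoking the same cancellation gives $f(H_m) = q_m$, so $q_m \in \zz$. The function $f - q_m\, a_{H_m}$ then still lies in $C(G)$ and is a rational combination of the remaining $n-1$ functions $a_{H_i}$, so the induction hypothesis finishes the argument.

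The one potential worry is that $\Phi G$ may be infinite and lack well-behaved maximal elements, making ``pick a maximal class'' suspect a priori. However, in both parts one only needs a maximum over the finite set $\{(H_1), \ldots, (H_n)\}$: for independence this is built into the notion of a finite $\zz$-linear relation, and for generation it is supplied by Proposition \ref{rational}, whose conclusion already produces a finite rational expression. Thus Proposition \ref{rational} does the substantive topological work, and the present proposition is essentially an integrality upgrade made available by the triangular support structure of the $a_H$.
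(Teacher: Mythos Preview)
Your proposal is correct and follows essentially the same approach as the paper: both use the triangular support structure of the $a_H$ (value $1$ at $(H)$, zero above), invoke the proof of Lemma \ref{lem:phiinj} for independence, and deduce integrality of the coefficients from Proposition \ref{rational} by evaluating at a maximal class. The only cosmetic difference is that the paper argues by contradiction (choosing $(H_k)$ maximal among those with $q_i \notin \zz$) while you use induction on $n$ by peeling off a maximal class.
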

\begin{proof}
The elements $a_H$, for  $(H) \in \Phi G$,  are linearly independent in $C(G)$ by the proof of Lemma \ref{lem:phiinj}.

Any element $f$ in $C(G)$ is in the rational image of $\phi$ by Proposition \ref{rational}. It suffices to show that if
$$f =  \textstyle\sum_{ i=1}^{n} q_i \, a_{H_i}     $$ is in $C(G)$, where  $q_i \in \qq$, then    $ q_i \in \zz $, for all $i$.
Assume that $ (H_k )$ is maximal among the $(H_i )$ with $q_i \not
\in \zz $. Then $ f ( H_k ) = q_k + \sum_j q_j a_{H_j} (H_k )
$ where the sum is over subgroups $H_j$ that properly contains
a conjugate of $H_k$. This gives a contradiction, so all $ q_i
$ are integers.
\end{proof}

\section{Congruence relations describing the image $\mathbf{\phi ( A(G) )}$ in $\mathbf{C(G)}$}
 It is possible to describe the image $\phi (A(G))$ in
$C(G)$ by a set of congruence relations. There is one
congruence relation for each element in $\Phi G$.
\begin{lem}
\label{lem:congruences} Let $G$ be a finite group.
Then  $$
\textstyle\sum_{g \in G} \, |X^g | \equiv 0 \text{ mod }  |G| $$ for all finite $G$-sets $X$.
\end{lem}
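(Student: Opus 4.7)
The plan is to deduce this from Burnside's orbit-counting lemma, which asserts the stronger identity
$$\sum_{g \in G} |X^g| = |G| \cdot |X / G|;$$
since $|X/G|$ is a nonnegative integer, the mod $|G|$ congruence follows at once.

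To prove the identity, I would use a double-counting argument on the set
$$S = \{ (g, x) \in G \times X \ | \ g x = x \}.$$
Counting $S$ by first fixing $g$ and varying $x$ gives $|S| = \sum_{g \in G} |X^g|$, which is the left-hand side of the claimed congruence. Counting $S$ by first fixing $x$ and varying $g$ gives $|S| = \sum_{x \in X} |G_x|$, where $G_x$ denotes the stabilizer of $x$.

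To evaluate the second sum, I would group elements of $X$ by their $G$-orbits. For any $x \in X$, the orbit-stabilizer theorem gives $|Gx| \cdot |G_x| = |G|$, and all points in the orbit of $x$ have conjugate stabilizers, so in particular their stabilizers all have the same order $|G_x|$. Summing $|G_y|$ over $y$ in a single orbit $Gx$ therefore yields $|Gx| \cdot |G_x| = |G|$. Summing over all orbits produces $\sum_{x \in X} |G_x| = |G| \cdot |X/G|$, which completes the identification of the two counts.

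There is no real obstacle here: the argument is entirely routine once the orbit-stabilizer theorem is available, and the mod $|G|$ reduction is immediate from the factor of $|G|$ in the closed form. The only minor point to be careful about is that stabilizers of different points in the same orbit are conjugate (not equal), but this is irrelevant for the count since conjugate subgroups have the same cardinality.
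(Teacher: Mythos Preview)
Your proposal is correct and is essentially the same as the paper's proof: both establish the identity $\sum_{g\in G}|X^g|=|G|\,|X/G|$ by a double-counting/rearrangement argument. The only cosmetic difference is that the paper first checks the identity on a transitive $G$-set $G/H$ (rewriting $\sum_g |(G/H)^g|$ as $\sum_{kH}|kHk^{-1}|=|G|$) and then extends additively, whereas you handle a general $X$ in one pass via the orbit--stabilizer theorem.
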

\begin{proof}  Note that $\Sigma_{g \in G} \,|(G/H)^g|$  equals  $\Sigma_{k H} \, |k H k^{-1}| = |G|$ by rearranging the summation.
This implies that
   $ \Sigma_{g \in G} \,  |X^g | = |G|
|X /G| $. \end{proof}
\begin{pro}
\label{congruencerelations}
 Let $G$ be a compact Lie group.
An element $f \in C(G)$ is in the image of
 $\phi  \colon  A(G) \rarr
C(G)$ if and only if, for each  $(H) \in    \Phi G$, it satisfies the following congruence
relation $$ \textstyle\sum_{C} \, n_{C/H}
f (C) \equiv 0 \text{ mod } |W_G H| $$ where the sum is over
all $C$ such that  $H \lhd C$ and $ C/H$ is a cyclic group;   $n_{C/H}$
is the number of generators of the cyclic subgroup $C /H$.
\end{pro}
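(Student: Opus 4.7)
The plan is to prove both directions, using Lemma \ref{lem:congruences}, the $\zz$-basis $\{a_H\}$ of Proposition \ref{additivebasis}, and the retract $\omega$ of Section \ref{sect:mark}.

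For necessity, since $\phi$ is $\zz$-linear, it suffices to verify the congruence on each generator $f = \phi([G/K])$. Fix $(H) \in \Phi G$. Because $H$ has finite Weyl group, $(G/K)^H$ is a finite set, acted upon by the finite group $W_G H = N_G H / H$. Applying Lemma \ref{lem:congruences} to this action yields
$$\sum_{gH \in W_G H} \bigl|((G/K)^H)^{gH}\bigr| \equiv 0 \pmod{|W_G H|}.$$
A direct check identifies $((G/K)^H)^{gH}$ with $(G/K)^{\langle H, g \rangle}$. Collecting the elements $gH$ according to the cyclic subgroup $C/H := \langle gH \rangle$ of $W_G H$ that they generate — each such cyclic subgroup admitting exactly $n_{C/H}$ generators — rewrites the left side as $\sum_C n_{C/H}\,|(G/K)^C|$, indexed by the subgroups $H \lhd C$ with $C/H$ cyclic. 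Any $C \notin \Phi G$ in this sum may be replaced by $\omega(C)$ without changing the fixed-point count, since the torus $\omega(C)/C$ must act trivially on the finite set $(G/K)^H$.

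For sufficiency, by Proposition \ref{additivebasis} write $f = \sum_K c_K a_K$ with $c_K \in \zz$ (finitely many nonzero). Since $\phi([G/K]) = |W_G K|\, a_K$, membership $f \in \phi(A(G))$ is equivalent to $|W_G K|$ dividing $c_K$ for every $(K) \in \Phi G$. I prove this divisibility by downward induction in the subconjugacy order. Fix $(H)$ and assume $|W_G K| \mid c_K$ for all $(K) > (H)$. Form
$$g := f - \sum_{(K) > (H)} \frac{c_K}{|W_G K|}\,\phi([G/K]) = \sum_{(K) \not> (H)} c_K\, a_K.$$
Since each $\phi([G/K])$ satisfies every congruence by the forward direction, $g$ satisfies every congruence if and only if $f$ does.

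Because $a_K$ is supported on $\{(L) : (L) \le (K)\}$, the function $g$ vanishes at every $(C) > (H)$. Applying the congruence for $g$ at $(H)$ — extending $g$ off $\Phi G$ by $g(C) := g(\omega(C))$ when necessary — every term with $C \supsetneq H$ in the sum contributes zero, because then $(C) > (H)$ strictly (in a compact Lie group, two conjugate closed subgroups that are comparable must coincide, so $H \subsetneq C$ implies $(H) < (C)$). Only the $C = H$ term survives, yielding $c_H = g(H) \equiv 0 \pmod{|W_G H|}$ and completing the induction. The main technical obstacle is precisely this bookkeeping for subgroups $C \notin \Phi G$ appearing in the congruence sums, handled uniformly by the $\omega$-extension together with the identity $(G/K)^C = (G/K)^{\omega(C)}$ that holds whenever $H \subset C$.
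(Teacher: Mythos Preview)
Your proof is correct and follows essentially the same route as the paper's: apply Lemma~\ref{lem:congruences} to the $W_G H$-action on $(G/K)^H$ for necessity, and use the basis $\{a_H\}$ of Proposition~\ref{additivebasis} together with a maximality/induction argument for sufficiency. The paper phrases the sufficiency step as ``choose $(H)$ maximal among those with $|W_G H| \nmid m_H$ and derive a contradiction,'' which is logically equivalent to your downward induction on the finite support of $f$.

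One remark: the ``main technical obstacle'' you identify is not actually present. Every subgroup $C$ appearing in the congruence sum contains $H$, and since $H$ has finite Weyl group, so does $C$ (this is recorded in Section~\ref{compactLie}: if $H$ is subconjugate to $K$ and $W_G H$ is finite, then $W_G K$ is finite). Hence $(C) \in \Phi G$ automatically, and the $\omega$-extension and the identity $(G/K)^C = (G/K)^{\omega(C)}$ are never needed. Dropping that bookkeeping would streamline your write-up.
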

\begin{proof} For each $(H) \in \Phi G$
  apply Lemma \ref{lem:congruences}
  to $ N_G H / H$ acting on the finite $N_G
H/H$-sets $( G/L)^H$ for any subgroup $L$ of $G$ with finite Weyl group. The congruence
relation for $H$ is
$$ \textstyle\sum_{C} \, n_{C/H} \phi_C ( [ G/ L ]) \equiv 0 \text{ mod } |W_G H|
$$ for any $[G/ L ] $
where the sum is over all $H \lhd C$ such that $ C/H$ is a
cyclic group and $n_{C/H}$ is the number of different generators of the
cyclic subgroup $C /H$.
 Hence any element in the image of the
mark homomorphism satisfies these relations.

Any element in $C(G)$ can be written as a sum $\Sigma_K \,  m_K
a_K$ where  $m_K$ are integers
and all but finitely many of them
are zero. Assume   the element $\Sigma_K \,  m_K
a_K$ satisfies all the congruence
relations.  If  the integer   $m_K $ is
 divisible by $| W_G K|$ for all $K$, then  $\Sigma_K \,  m_K
a_K$ is in the image of $\phi$ since $m_K a_K = \frac{m_K}{  | W_G K |}   \phi ([G/K])$. Assume that this is not the case and
let $(H)$ be  maximal   for which $m_H$ is not divisible
by $| W_G H|$. Since  $m_K a_K$ satisfies all the congruence relations for all $K$ properly containing a  conjugate of $H$, the   congruence relation for $(H)$
gives that $m_H
\equiv 0 \text{ mod } |W_G H|$.
This is  a contradiction. Hence
$|W_G K|$ must divide $m_K$ for all $K$.  \end{proof}

It is a theorem of tom Dieck that there is a greatest common
divisor of  $|W_G H |$ for all subgroups  $H$ of $G$ \cite{tdf}.
 Let $|G|$ denote the greatest common divisor of $|W_G H|$ for all subgroups $H$ of $G$ with finite Weyl group.  If $G$ is a finite group, then  $|G|$ equals the number of elements
in  $G$. The congruence relations gives the following.
\begin{cor} There is an inclusion  $|G| C(G) \subset \phi (A(G))$.
\end{cor}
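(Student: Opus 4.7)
The plan is to apply Proposition \ref{congruencerelations}, which characterizes the image $\phi(A(G))$ inside $C(G)$ purely in terms of the countable family of congruence relations indexed by $(H) \in \Phi G$. So I take an arbitrary $f \in C(G)$ and must verify that the function $|G| \cdot f$ satisfies, for each $(H) \in \Phi G$,
$$ \textstyle\sum_{C} n_{C/H} \bigl( |G| f(C) \bigr) \equiv 0 \text{ mod } |W_G H|, $$
where the sum is over the $C$ with $H \lhd C$ and $C/H$ cyclic.

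The key observation, which makes the verification immediate rather than term-by-term subtle, is the definition of $|G|$ itself: by tom Dieck's theorem, $|G|$ is the greatest common divisor of the orders $|W_G H|$ over all subgroups $H$ of $G$ with finite Weyl group. In particular, for every fixed $(H) \in \Phi G$, the integer $|W_G H|$ is a divisor of $|G|$. Consequently, $|W_G H|$ divides $|G| f(C)$ for every subgroup $C$ appearing in the sum, and thus divides every individual term $n_{C/H} \cdot |G| f(C)$. The congruence is therefore satisfied trivially, as each summand is already $\equiv 0$ mod $|W_G H|$, and there is no need to use any cancellation between the terms.

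Since $f \in C(G)$ was arbitrary, $|G| \cdot f$ lies in $\phi(A(G))$ by the ``if'' direction of Proposition \ref{congruencerelations}, which establishes the asserted inclusion $|G| C(G) \subset \phi(A(G))$. I do not anticipate any real obstacle: the entire content of the corollary is the divisibility statement ``$|W_G H|$ divides $|G|$ for every $(H) \in \Phi G$'', which is built into the definition of $|G|$, combined with the congruence-relation characterization of the image. The only small point to mention explicitly is that the sum in the congruence relation is finite (since $f$ is continuous on the compact space $\Phi G$ and hence takes only finitely many nonzero contributions in any given relation), so that the term-by-term divisibility argument is genuinely valid.
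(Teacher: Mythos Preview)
Your overall strategy---check that $|G|\cdot f$ satisfies every congruence in Proposition \ref{congruencerelations}, using only that $|W_G H|$ divides $|G|$---is exactly the intended one-line argument, and it is what the paper means by ``the congruence relations give the following.''

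There is, however, a genuine slip in your divisibility reasoning. You write that $|G|$ is the \emph{greatest common divisor} of the $|W_G H|$ and then deduce that ``$|W_G H|$ is a divisor of $|G|$''. That inference is backwards: a gcd divides each of the numbers, not conversely. What makes the corollary work is the opposite divisibility, namely that every $|W_G H|$ divides $|G|$. This is in fact the content of tom Dieck's finiteness theorem: for a compact Lie group there is a finite \emph{common multiple} of the orders $|W_G H|$, and $|G|$ is defined to be (the least) such multiple. The paper's phrase ``greatest common divisor'' is a misstatement; the very next sentence, noting that for finite $G$ the number $|G|$ is the group order (which every $|W_G H|$ divides), confirms the intended meaning. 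Once you use the correct direction $|W_G H|\mid |G|$, your term-by-term argument goes through exactly as you wrote it.

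One minor point: your justification for why the congruence sum is finite is off target. The sum at $(H)$ ranges over those $C$ with $H\lhd C\leq N_G H$ and $C/H$ cyclic; since $W_G H$ is a finite group, there are only finitely many such $C$. Continuity of $f$ plays no role here.
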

For certain purposes the number $|G|$ serves as the order of the compact Lie group.  A generalization of the  Artin induction theorem to  compact Lie groups make use of  a whole family of different orders of compact Lie groups \cite{abi}.  These orders all reduce to the number of elements in $G$ when $G$ is a finite group.

\section{The prime ideal spectrum  of $\mathbf{A(G)}$ }
The ring map $ \phi  \colon  A(G) \rarr C(G) $ is an integral
extension since $C(G)$ is additively generated by idempotent
elements.
  Hence by the going up theorem in commutative algebra the map
 $$ \spec \phi \colon  \spec C(G) \rarr \spec A(G)$$ is surjective. That is,
   all the prime ideals of $ A(G) $
are  of the form $ \phi^{ -1 } ( P ) $ for some prime ideal
$P$ in $ C(G)$.

The  prime ideals in $C(G)$ are all obtained by  applying  the
following
 standard lemma to $ X = \Phi G$ and $R = \zz$.
\begin{lem} \label{space}  Let $ X $ be a totally disconnected compact Hausdorff space,
and let $R$ be a ring. Then there is an homeomorphism
$$ q \colon   X \times \spec R \rarr   \spec  C ( X ,  R ) $$
given by sending $ (x , P) $ to the prime ideal $$ \{ f \in C(X
, R ) \ | \ f ( x) \in P \} . $$
\end{lem}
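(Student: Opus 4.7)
The plan is to verify that $q$ is well-defined, bijective, and a homeomorphism. I assume $R$ is given the discrete topology, so elements of $C(X,R)$ are locally constant; in particular, for each clopen $U \subset X$ and $r \in R$, the function $r\chi_U$ belongs to $C(X,R)$. Well-definedness is clear, since $q(x,P)$ is the preimage of $P$ under the ring homomorphism $\mathrm{ev}_x \colon C(X,R) \to R$. Injectivity is easy: two distinct points of $X$ are separated by a characteristic function on a clopen neighborhood, and two distinct primes of $R$ are separated by the corresponding constant functions.

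The crux is surjectivity. Given a prime ideal $Q \subset C(X,R)$, set $\mathcal{F} = \{U \subset X \text{ clopen} : 1 - \chi_U \in Q\}$. The identity $\chi_U(1-\chi_U)=0$ together with primeness of $Q$ forces exactly one of $\chi_U$, $1-\chi_U$ into $Q$ for each clopen $U$, so $\mathcal{F}$ contains exactly one of $U$, $X \setminus U$. The identity $(1-\chi_U)(1-\chi_V)=1-\chi_{U\cup V}$ (valid since $\chi_U+\chi_V-\chi_U\chi_V=\chi_{U\cup V}$) specializes when $U\subset V$ to $1-\chi_V=(1-\chi_U)(1-\chi_V)$, yielding upward closure of $\mathcal{F}$; the identity $1-\chi_U\chi_V = (1-\chi_U) + \chi_U(1-\chi_V)$ yields closure under intersection. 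Hence $\mathcal{F}$ is an ultrafilter on the Boolean algebra of clopen subsets of $X$, and because $X$ is compact Hausdorff and totally disconnected, this ultrafilter corresponds to a unique point $x\in X$, characterized by $x\in U \iff U\in\mathcal{F}$. Let $P=Q\cap R$ (via constant functions), a prime of $R$. For any $f\in C(X,R)$, the clopen level set $U_f=f^{-1}(f(x))$ contains $x$, so $1-\chi_{U_f}\in Q$, and since $f-f(x)$ vanishes on $U_f$ we get $f - f(x) = (f-f(x))(1-\chi_{U_f}) \in Q$. Therefore $f \in Q$ iff $f(x)\in Q\cap R=P$, i.e., $Q=q(x,P)$.

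For the topology, a basic open $D(f)\subset\spec C(X,R)$ pulls back to $\{(x,P) : f(x)\notin P\}$; given such a point, $f^{-1}(f(x))\times D(f(x))$ is a basic product neighborhood contained in the preimage, so $q$ is continuous. Conversely, for a basic product open $U\times D(r)$ with $U$ clopen, the image $q(U\times D(r))$ equals $D(r\chi_U)$, because $r\chi_U$ evaluates to $r$ on $U$ and to $0$ off $U$; so $q$ is open. The main obstacle is surjectivity, where the delicate step is extracting a point of $X$ from the abstract prime $Q$ by translating the primeness and ideal properties of $Q$ into the three ultrafilter axioms for $\mathcal{F}$ via the Boolean identities above, and then invoking the Stone correspondence between ultrafilters of clopens and points of $X$.
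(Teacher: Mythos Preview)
Your argument is correct. The paper only sketches the proof: for a prime ideal $J\subset C(X,R)$ it forms, at each point $x\in X$, the value-ideal $I(J,x)=\{f(x):f\in J\}\subset R$, asserts that there is a unique point $x_J$ at which $I(J,x_J)$ is proper, and declares $J\mapsto(x_J,I(J,x_J))$ to be the inverse of $q$, with continuity left to the reader. Your route is a genuine repackaging of the same content: rather than scanning all value-ideals, you read off from $J$ the ultrafilter $\mathcal{F}=\{U\text{ clopen}:1-\chi_U\in J\}$ on the clopen algebra and invoke the Stone correspondence to produce the point; your Boolean identities are exactly what one would need to justify the paper's bare assertion that $x_J$ exists and is unique. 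What your approach buys is that the roles of compactness and total disconnectedness become explicit (they enter precisely through Stone duality), and you also supply the continuity and openness checks that the paper omits. What the paper's formulation buys is brevity and a description of the inverse map that does not presuppose the reader knows Stone duality.
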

\begin{proof} (Outline of a proof.)  Let $c $ be an ideal in
$C ( X , R )$. Let $$ I (c ,x ) = \{ f (x) \ | \ f \in c \} $$
be the ideal of all the values of the functions in $c$  at
$x$. If $J$ is a prime ideal, then there is a unique $x_J$
such that $ 1 \not\in I (c , x_J )$. The map sending $J$ to
$ ( x_J , I (c , x_J ) ) $ is an inverse to $q$. The two maps
are continuous. \end{proof}

 The prime ideals of $ A (G) $ are all of the form
$$ \begin{aligned} q ( H , 0)  & = \{  x \in A(G)  \ | \ \phi_H  (x )   =0  \, \}  \\
 q ( H , p) & = \{ x \in A(G)  \ | \ \phi_H (x)   \in p \zz \} \end{aligned} $$
for $p$ any prime number and $(H) $ in $ \Phi G $. This follows by pulling back the prime ideals of $C(G)$   along $\phi$.

\begin{lem} The Burnside ring $A(G)$ has krull dimension 1 for all compact Lie groups $G$. The maximal ideals of $A(G)$ are $q(H ,p)$ for $ (H) \in \Phi G $ and primes  $p$. The minimal prime ideals of $A(G)$ are  $q(H , 0)$  for $ (H) \in \Phi G $.
There is an inclusion $ q (H, 0 ) \subset q (K ,p)$ if and only if $ q (H ,p) = q (K ,p)$.  \end{lem}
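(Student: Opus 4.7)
My plan is to leverage the integrality of $\phi \colon A(G) \rarr C(G)$ together with the description of $\spec C(G)$ already provided by Lemma \ref{space}. First I would apply that lemma with $X = \Phi G$ and $R = \zz$: since $\Phi G$ is totally disconnected, clopen separation of any two distinct conjugacy classes $(H) \ne (K)$ yields a continuous $\{0,1\}$-valued function witnessing that the primes $(H,0)$ and $(K,0)$ of $C(G)$ are incomparable. Hence the only strict chains in $\spec C(G)$ are $(H,0) \subsetneq (H,p)$, and $\dim C(G) = 1$. Integrality of $\phi$ together with the going-up theorem and incomparability then gives $\dim A(G) = \dim C(G) = 1$.

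Next I would pull the classification of $\spec C(G)$ back along $\phi$. Surjectivity of $\spec \phi$ (already noted in the excerpt) combined with the fact that $(H,0)$ and $(H,p)$ pull back to $q(H,0)$ and $q(H,p)$ shows that these two families exhaust $\spec A(G)$. The strict inclusion $q(H,0) \subsetneq q(H,p)$ is witnessed by the prime integer $p$ (since $\phi_H(p) = p \ne 0$), so combined with $\dim A(G) = 1$ this forces every $q(H,p)$ to be maximal and every $q(H,0)$ to be minimal.

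For the final equivalence, the implication $q(H,p) = q(K,p) \Rightarrow q(H,0) \subset q(K,p)$ is immediate from $q(H,0) \subset q(H,p)$. For the converse I would use that $\phi_H \colon A(G) \rarr \zz$ is surjective, since its image contains $\phi_H([G/G]) = 1$; with kernel $q(H,0)$, this gives an isomorphism $A(G)/q(H,0) \cong \zz$. Under this isomorphism the image of $q(K,p)$ is a prime ideal of $\zz$ containing the residue of $p$, so it must equal $(p)$. Pulling back along $\phi_H$ then yields $q(K,p) = \{\, x \in A(G) : \phi_H(x) \in p\zz \,\} = q(H,p)$.

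The main potential obstacle is the final equivalence, but even there the argument collapses to essentially one line once the identification $A(G)/q(H,0) \cong \zz$ is in hand; the rest of the work is already absorbed into Lemma \ref{space} and the surjectivity of $\spec \phi$ established just before the lemma statement.
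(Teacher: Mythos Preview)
Your argument is correct, but the paper takes a more direct route that avoids the detour through $\dim C(G)$ and the going-up/incomparability machinery. The paper simply computes the quotients $A(G)/q(H,p) \cong \zz/p$ and $A(G)/q(H,0) \cong \zz$ via $\phi_H$; maximality of $q(H,p)$ is then immediate (the quotient is a field), and minimality of $q(H,0)$ follows because any proper inclusion among the $q(H,0)$'s would force a nontrivial quotient of $\zz$, which is finite. For the converse in the final equivalence the paper argues by contrapositive at the element level: if $q(H,p) \neq q(K,p)$, choose $x \in q(H,p) \setminus q(K,p)$ and set $y = x - \phi_H(x)\cdot 1$, so that $y \in q(H,0)$ but $y \notin q(K,p)$. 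Your version via the prime-ideal correspondence in $A(G)/q(H,0) \cong \zz$ is the same computation packaged more abstractly. Your approach buys a uniform appeal to standard commutative algebra; the paper's approach buys self-containment and sidesteps the (easy but not entirely free) verification that primes of $C(G)$ lying over distinct points of $\Phi G$ are incomparable.
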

\begin{proof}
 The quotient rings of $A(G)$  are  $ A(G) / q ( H , p ) \cong \zz / p$ and $A(G) / q (H , 0) \cong \zz$. The isomorphisms are induced by $\phi_H$.  The ideal $q(H,p)$ is maximal since $ A(G) / q(H,p)$ is a field. There are no  proper containments among the prime ideas of the form $q (H, 0)$ because  a quotient of $\zz$ by a nonzero ideal is finite. Hence $q(H,0)$ is a  minimal prime ideal.

 There is an inclusion $ q (H, 0 ) \subset q (H ,p)$ for  $(H) \in \Phi G$ and $p$ a prime number. Hence the equality $ q (H ,p) = q (K ,p)$ implies that  $ q (H, 0 ) \subset q (K ,p)$.  Conversely, assume that $ q (H, p ) $ and  $ q (K ,p)$ are not equal. Since they are maximal ideals,  there is then an element $x \in A(G)$ such that $ x \in q(H,p)$ and $x \not\in q ( K,p)$.  Define the  element $y = x - \phi_H (x) \cdot 1 $.  Then $y \in   q(H,0)$ and $y \not\in q ( K,p)$. So $ q (H, 0 ) $ is not a subset of $ q (K ,p)$.
\end{proof}

If $ K \triangleleft H $ is a normal subgroup with
a $p$-group quotient, then   $q (K,p) = q(H,p) $. (This follows since for any finite   $\zz/p$-set $X$, the difference  $ |X| -|X^{\zz /p } |$ is divisible by $p$.)
Given a closed subgroup $H$ of $G$. Let  $ H'_p$ be the minimal normal  subgroup  of  $H$ such that  $H / H'_p$ is a  finite $p$-group.   The  group $ H_{p}'$ might not have finite Weyl group as a
subgroup of $G$. The  conjugacy class $(H_p) =
\omega ( H'_p )$ is called the $p$-perfection of $H$ in $G$ (See section \ref{sect:mark}).

\begin{lem} \label{maxideals} Choose a representative $H_{p}$  for the conjugacy class $(H_{p})$. There is an extension $H^p$ of $ H_{p}$ such that $ H^p / H_p$ is a finite $p$-group and $| W_G H^p|$ is relative prime to $p$. The conjugacy class $(H^p)$ only depends on $(H)$.  There are identities  $$ (( H_p )^p) = (H^p ) \ \text{and} \ (( H^p
)_p) = (H_p ) ,$$
and
$$ q (H,p) = q (H^p ,p) = q (H_p ,p),  $$ for    $(H) \in \Phi G$ and prime numbers $p$.
\end{lem}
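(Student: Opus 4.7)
The plan is to construct $H^p$ via a Sylow argument in the finite Weyl group $W_G H_p$, verify that $H_p$ is characteristic in $H^p$ so that the Sylow normalizer computes $W_G H^p$, and deduce the identities and ideal equalities from the $p$-perfection of $H_p$ together with the remark preceding the lemma.

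First I would fix a Sylow $p$-subgroup $P$ of the finite group $W_G H_p = N_G H_p / H_p$ and define $H^p$ as the preimage of $P$ in $N_G H_p$, so that $H^p/H_p \cong P$ is a finite $p$-group. To compute $|W_G H^p|$, I would identify $H_p$ with the $p$-residual $O^p(H^p)$ (the intersection of normal subgroups with finite $p$-group quotient). Because $H_p = H'_p \cdot T$ with $T$ a maximal torus in $C_G H'_p$, the quotient $H_p/H'_p$ is a torus and hence has no nontrivial finite quotient, so $O^p(H_p) = H_p$; combined with $H^p/H_p$ being a $p$-group this forces $O^p(H^p) = H_p$. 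Hence $H_p$ is characteristic in $H^p$, so $N_G H^p \subset N_G H_p$, and one identifies $N_G H^p / H_p$ with $N_{W_G H_p}(P)$. Therefore $W_G H^p \cong N_{W_G H_p}(P)/P$, whose order is prime to $p$ by Sylow.

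Conjugacy of Sylow $p$-subgroups and transport of conjugate representatives of $(H_p)$ show that the conjugacy class $(H^p)$ depends only on $(H)$. The identity $((H^p)_p) = (H_p)$ reads $\omega((H^p)'_p) = (H_p)$ and holds because $(H^p)'_p = O^p(H^p) = H_p$ by the above, and $\omega(H_p) = H_p$ since $H_p \in \Phi G$. Similarly $(H_p)_p = H_p$ by the $p$-perfection of $H_p$, so the $(\cdot)^p$ construction applied to $H_p$ reproduces our $H^p$, giving $((H_p)^p) = (H^p)$. The equality $q(H_p,p) = q(H^p,p)$ is immediate from the remark preceding the lemma applied to $H_p \triangleleft H^p$.

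For $q(H,p) = q(H_p,p)$ the subgroup $H'_p$ may not lie in $\Phi G$, so I would bypass $\phi_{H'_p}$ via Euler characteristics: for any $L \in \Phi G$, the finite $p$-group $H/H'_p$ acts on $(G/L)^{H'_p}$ with fixed set $(G/L)^H$, giving $\chi((G/L)^{H'_p}) \equiv \chi((G/L)^H) \pmod p$, while the torus $H_p/H'_p$ yields $\chi((G/L)^{H'_p}) = \chi((G/L)^{H_p}) = |(G/L)^{H_p}|$. Hence $\phi_{H_p} \equiv \phi_H \pmod p$ on the generators of $A(G)$, so $q(H,p) = q(H_p,p)$. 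The main obstacle is the characteristic-subgroup argument, which relies on the precise torus structure of $H_p/H'_p$ to guarantee $p$-perfection of $H_p$; a secondary delicate point is the appeal to Euler characteristics to sidestep the fact that $H'_p$ need not have finite Weyl group.
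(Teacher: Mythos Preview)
Your argument is correct and uses the same Sylow construction as the paper: $H^p$ is the preimage in $N_G H_p$ of a Sylow $p$-subgroup of $W_G H_p$.  Where the paper argues that $|W_G H^p|$ is prime to $p$ by contradiction---assuming a further normal $p$-extension $H^p \lhd K$, using that $p$-groups are solvable to force $H_p \lhd K$, and contradicting the Sylow maximality of $H^p/H_p$ in $W_G H_p$---you instead observe directly that $H_p = O^p(H^p)$ is characteristic in $H^p$, whence $N_G H^p \leq N_G H_p$ and $W_G H^p \cong N_{W_G H_p}(P)/P$.  Your route is cleaner and makes explicit the reason $H_p \lhd K$ in the paper's argument: that step tacitly needs $H_p$ characteristic in $H^p$, which is exactly your $O^p$ identification.

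The paper's proof does not actually address the equality $q(H,p) = q(H_p,p)$; it only records the identities $((H_p)^p) = (H^p)$ and $((H^p)_p) = (H_p)$.  Your Euler-characteristic argument---$\chi((G/L)^{H'_p}) \equiv \chi((G/L)^{H}) \pmod p$ from the $p$-group action of $H/H'_p$, together with $\chi((G/L)^{H'_p}) = \chi((G/L)^{H_p})$ from the torus $H_p/H'_p$---supplies this missing step and is exactly in the spirit of the remark in Section~\ref{alternatives} that $\chi(X^K) = \chi(X^H)$ when $H/K$ is a torus.  So your proposal is both correct and somewhat more complete than the proof in the paper.
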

\begin{proof}
 Let $H^p$ be the preimage of a  Sylow $p$-subgroup of $ N_G H_p / H_p$ in $N_G H_p$ (unique up to conjugation).  Assume  the order of the Weyl group $ W_G H^p$ is  not  relative prime to $p$. Then  there are  two proper normal extensions
 $$ H_p \lhd H^p \lhd K$$
 both of whose quotients are $p$-groups. Then   $ H_p $ must be normal in $ K$ since $p$-groups are solvable.  This gives a  contradiction since  $H^p /  H_{p}$ was a maximal $p$-group in $N_H H_p / H_{p}$.  The equality $( H_p )^p = (H^p )$ follows by the definition of $H^p$.  There is an inclusion $ (H^p)'_p \leq H_p$.
 The normal subgroup $ (H^p)'_p \cap H'_p $ of $H$ must equal $H'_p$ since   $$ H'_p / (H^p)'_p \cap H'_p \cong H_p /  (H^p)'_p    $$ is a $p$-group.  Hence $ (H^p)'_p $ equals $  H_p$, and so $ (H^p)_p = H_p$.
\end{proof}

\begin{exmp} Consider the nontrivial semidirect product  $S^1 \rtimes \zz / 2$, and $p=2$.   The subgroup  $ H = 0 \rtimes \zz /2$ has finite Weyl group.  In this case  $ H_2 = S^1 \rtimes 0 $ and $ H^2 = S^1 \rtimes \zz / 2$. \end{exmp}

\begin{lem}  There is an equality
 $  q ( H ,0 ) =  q ( K ,0)$ if and only if  $   (H )= (K )$.
 There is an equality
 $  q ( H ,p ) =  q ( K ,q )$ if and only if $p =q$ and  $   (H^p )= (K^p )$.
\end{lem}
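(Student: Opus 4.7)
The plan is to handle the two equivalences separately, using the description of the prime ideals of $A(G)$ from the preceding lemma together with Lemma \ref{maxideals}. I will rely on three facts: the residue isomorphisms $A(G)/q(H,0)\cong\zz$ and $A(G)/q(H,p)\cong\zz/p$ induced by $\phi_H$; Lemma \ref{maxideals}, which provides representatives $H^p$ whose Weyl group order is coprime to $p$; and the standard fact that on closed subgroups of a compact Lie group mutual subconjugacy implies conjugacy (via matching identity-component dimensions and numbers of path components).

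For the first equivalence, the ``if'' direction is immediate since $\phi_H$ depends only on the conjugacy class of $H$. For the converse I would assume $(H)\neq(K)$ and use antisymmetry of subconjugacy to reduce to the case where $H$ is not subconjugate to $K$. Then $\phi_H([G/K])=|(G/K)^H|=0$ places $[G/K]$ in $q(H,0)$, while $\phi_K([G/K])=|W_G K|\neq 0$ keeps it out of $q(K,0)$, so the ideals differ.

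The ``if'' direction of the second equivalence is just a chain of applications of Lemma \ref{maxideals}: $q(H,p)=q(H^p,p)=q(K^p,p)=q(K,p)$. For the converse, I would first note that the residue fields $A(G)/q(H,p)\cong\zz/p$ and $A(G)/q(K,q)\cong\zz/q$ force $p=q$. Then, applying Lemma \ref{maxideals} again, I would replace $H$ and $K$ by $H^p$ and $K^p$ without affecting the ideals; the payoff is that $|W_G H^p|$ and $|W_G K^p|$ are now coprime to $p$. Assuming for contradiction that $(H^p)\neq (K^p)$, antisymmetry of subconjugacy lets me suppose $H^p$ is not subconjugate to $K^p$. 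Then $\phi_{H^p}([G/K^p])=0$ puts $[G/K^p]$ into $q(H^p,p)$, while $\phi_{K^p}([G/K^p])=|W_G K^p|$ is a unit modulo $p$, so $[G/K^p]\notin q(K^p,p)$, contradicting $q(H^p,p)=q(K^p,p)$.

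The main obstacle is organizational rather than computational: without first passing to the $p$-perfected representatives via Lemma \ref{maxideals}, the integer $|W_G K|$ might be divisible by $p$ and fail to witness non-membership in $q(K,p)$. Once that reduction is in place, the separating elements $[G/K^p]$ work exactly as in the torsion-free case, and the appeal to antisymmetry of subconjugacy for closed subgroups of a compact Lie group is routine.
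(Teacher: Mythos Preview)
Your proposal is correct and follows essentially the same route as the paper: separate the ideals using elements $[G/K]$ (resp.\ $[G/K^p]$) together with antisymmetry of subconjugacy, and in the $p$-local case first pass to $H^p$, $K^p$ via Lemma~\ref{maxideals} so that the relevant Weyl-group orders are units mod $p$. The only cosmetic difference is that the paper argues the second part directly---showing $(G/H^p)^{K^p}$ and $(G/K^p)^{H^p}$ are both nonempty, hence mutual subconjugacy---whereas you phrase the same computation contrapositively.
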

\begin{proof}
Note that $ \phi ( [G/ H ]) (H) =| W_G H| $ and  $ \phi ( [G/ H ]) (K) =0 $, whenever $ K$ is not subconjugated to
$H$. A  closed subgroup of $G$ can not  be properly subconjugated to itself. Hence if $ (H) \not=(K) $ in $ \Phi G$, then
$q (H,0) \not= q(K,0)$.

By  Lemma \ref{maxideals} there are identifications  $ q (H, p ) =  q (H^p , p ) $ and  $ q (K ,p )=  q (K^p , p ) $.  Since $A(G) / q (H , p ) \cong \zz /p$, the equality  $  q ( H ,p ) =  q ( K ,q )$ implies that $p =q$.
Both $ (G/ H^p )^{K^p}$ and $ (G/ K^p )^{H^p}$ are nonempty since the Weyl groups of $H^p$ and $K^p$ are not  divisible by  $p$. Hence $(H^p)$ and $(K^p)$ must be equal.
\end{proof}

\begin{lem}  \label{omeg} Let  $ H \leq J \leq K $ be subgroups of $G$. If  $\omega (H ) $ is equal to $
\omega (K) $, then $\omega (H ) = \omega (J) = \omega (K) $.
\end{lem}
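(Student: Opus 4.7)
The plan is to pick a concrete representative $L$ of the class $\omega(K)=\omega(H)$ containing the whole chain, and then to realize $L$ as a maximal toral extension of $H$ inside which the analogous statement for $J$ follows by direct inspection.

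First I set $L:=KT_K$, where $T_K$ is a maximal torus of $C_G(K)$; then $L$ represents $\omega(K)=\omega(H)$, lies in $\Phi G$, and the chain $H\le J\le K\le L$ is preserved. Because $T_K\le C_G(K)\le C_G(H)$ and $T_K\le L$, we have $T_K\le L\cap C_G(H)$, and I extend $T_K$ to a maximal torus $T$ of $L\cap C_G(H)$.

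The key step, and the main obstacle, is to show that $L=HT$ and that $T$ is actually a maximal torus of $C_G(H)$, not merely of $L\cap C_G(H)$. Since $\omega(H)=(L)$, any maximal torus $T_H$ of $C_G(H)$ satisfies $HT_H\sim L$ in $G$, so $\dim HT_H=\dim L$ with matching component groups. After extending $T$ within $C_G(H)$ to such a $T_H$ (so that $T\le T_H$), a comparison of $HT\le L$ with the $G$-conjugate $HT_H$ would force $HT=HT_H=L$ on dimension and component grounds; in particular $T_H\le L$, and the maximality of $T$ inside $L\cap C_G(H)$ then forces $T=T_H$. Informally, this says that every conjugate of $H$ sitting inside $L$ sees $L$ as its own maximal toral extension, and it is the essential use of both $\omega(H)=(L)$ and $L\in\Phi G$.

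Granted this, the conclusion follows formally. Since $T$ centralizes $H$, $H\lhd L=HT$ and $L/H\cong T/(T\cap H)$ is a torus, so any subgroup $J$ with $H\le J\le L$ decomposes as $J=H(J\cap T)$. For $t\in T$ and $j=ht'\in J$ with $h\in H$, $t'\in J\cap T$, the calculation $tjt^{-1}=(tht^{-1})(tt't^{-1})=ht'=j$ (using $T\le C_G(H)$ and that $t,t'$ commute in the abelian torus $T$) shows $T\le C_G(J)$. Since $C_G(J)\le C_G(H)$ and $T$ is maximal in $C_G(H)$, $T$ is a maximal torus of $C_G(J)$, so $\omega(J)=(JT)$. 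Finally $JT\le L$ (both factors lie in $L$) and $JT\supseteq HT=L$ (since $J\supseteq H$), so $JT=L$ and $\omega(J)=(L)=\omega(H)=\omega(K)$.
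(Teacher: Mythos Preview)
Your overall strategy is sound, and the final paragraph (deducing $\omega(J)=(L)$ once you know $L=HT$ with $T$ a maximal torus of $C_G(H)$) is correct and in fact more explicit than the paper's conclusion. The problem is precisely where you flag it: the ``key step.''

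You have $HT\le L$, $HT\le HT_H$, and $HT_H$ conjugate to $L$. You then assert that ``dimension and component grounds'' force $HT=HT_H=L$. They do not. From $HT_H\sim_G L$ you only get $\dim HT_H=\dim L$ and $|\pi_0(HT_H)|=|\pi_0(L)|$; from $HT\le L$ you get $\dim HT\le\dim L$. Nothing in these inequalities prevents $HT$ from being a proper subgroup of both $L$ and $HT_H$, with $HT_H$ and $L$ distinct conjugate overgroups. You have no containment between $HT_H$ and $L$, so the ``no proper self-subconjugation'' principle does not apply. The phrase ``every conjugate of $H$ sitting inside $L$ sees $L$ as its own maximal toral extension'' is exactly what needs proof, not a consequence of what you have written.

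What is missing is the observation the paper uses: $T_H$ is a maximal torus of $C_G(HT_H)$ (since $T_H\le C_G(HT_H)\le C_G(H)$ and $T_H$ is already maximal in $C_G(H)$), and likewise $T_K$ is a maximal torus of $C_G(KT_K)$. Since $HT_H$ and $KT_K=L$ are conjugate, so are their centralizers, hence so are their maximal tori; thus $T_H$ and $T_K$ are conjugate, in particular $\dim T_H=\dim T_K$. Combined with your chain $T_K\le T\le T_H$ this forces $T_K=T=T_H$. Then $HT_H=HT_K\le KT_K=L$, and now conjugacy plus containment gives $HT_H=L$. In other words, the route to $T_H\le L$ is \emph{via centralizers of the toral extensions}, not via a direct size comparison of $HT$ with $L$; once you insert that step your argument goes through.
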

\begin{proof}  The conjugacy class  $ \omega (H)$ is  the conjugacy class of
$ H T_H $ where $T_H$ is a maximal torus in $ C_G H$.
Similarly for $J$ and $K$. Since $ C_G (K) \leq C_G ( J) \leq C_G H$ the maximal tori can be chosen such that  $ T_K \leq T_J \leq
T_H $.
The torus  $ T_H$ is a
maximal torus in $ C_G (H T_H)$ and  $ T_K$ is a maximal
torus in $ C_G (H T_K)$. The
   assumption that $  H T_H $ and $ K T_K $ are conjugate subgroups in $G$ gives that that $T_H$ is subconjugated to $T_K$, hence  $T_H = T_K$. Since
 $ T_K \leq  T_J \leq T_H $ this implies that  $ T_H = T_J = T_K $. The result now follows since
 $ H T_H = J  T_J = K  T_K $.
\end{proof}

The next  result was first proved   by  Bauer and May  \cite{max}.
\begin{pro} \label{bm} Let $ H \leq J \leq  K $ be subgroups of $G$. If   $  q ( H ,p ) $ is equal to $ q ( K ,p )$, then   $  q ( H ,p ) =  q ( J ,p ) =  q ( K ,p )$.
\end{pro}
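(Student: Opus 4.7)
The plan is to reduce the proposition to Lemma \ref{omeg} by passing from the chain $H \leq J \leq K$ to the chain of $p$-residual subgroups $H'_p \leq J'_p \leq K'_p$ and applying the $\omega$-monotonicity statement there. By the preceding lemma, the hypothesis $q(H,p) = q(K,p)$ is equivalent to $(H^p) = (K^p)$, and since the identities $((H_p)^p) = (H^p)$ and $((H^p)_p) = (H_p)$ of Lemma \ref{maxideals} show that the operations $(-)^p$ and $(-)_p$ are mutually inverse bijections on the relevant conjugacy classes, the hypothesis is in turn equivalent to $(H_p) = (K_p)$, i.e., $\omega(H'_p) = \omega(K'_p)$. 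What must be shown is therefore $\omega(H'_p) = \omega(J'_p) = \omega(K'_p)$.

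The first substantive step is to verify the subgroup inclusion $H'_p \leq J'_p \leq K'_p$ inside $G$. Since $J'_p$ is normal in $J$ and $H \leq J$, the intersection $H \cap J'_p$ is normal in $H$; the canonical injection $H/(H \cap J'_p) \hookrightarrow J/J'_p$ exhibits the quotient as a subgroup of a finite $p$-group, hence itself a finite $p$-group, so the minimality clause in the definition of $H'_p$ forces $H'_p \leq H \cap J'_p \leq J'_p$. The identical argument applied to $J \leq K$ gives $J'_p \leq K'_p$, and together they provide the desired chain of closed subgroups of $G$.

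With this chain in hand, Lemma \ref{omeg} applies directly: from $\omega(H'_p) = \omega(K'_p)$ it yields $\omega(H'_p) = \omega(J'_p) = \omega(K'_p)$, i.e., $(H_p) = (J_p) = (K_p)$. Running the equivalence of the preceding lemma in reverse then gives $q(H,p) = q(J,p) = q(K,p)$.

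The main obstacle—or rather the only nontrivial verification—is the subgroup inclusion $H'_p \leq J'_p$; beyond this the proof is a matter of packaging Lemma \ref{maxideals} and Lemma \ref{omeg} in the correct order, with Lemma \ref{omeg} supplying the essential combinatorial input.
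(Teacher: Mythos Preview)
Your proof is correct and follows essentially the same route as the paper: establish the chain $H'_p \leq J'_p \leq K'_p$ via the minimality of the $p$-residual (using that $H/(H\cap J'_p)$ embeds in the $p$-group $J/J'_p$), and then invoke Lemma~\ref{omeg}. The only difference is that you spell out explicitly, via Lemma~\ref{maxideals} and the preceding lemma, why the hypothesis $q(H,p)=q(K,p)$ is equivalent to $\omega(H'_p)=\omega(K'_p)$, whereas the paper leaves this translation implicit.
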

\begin{proof}
Assume that $H$ is a subgroup of $J$.  Let $J'_p$ be the smallest normal subgroup of $J$ such that $ J / J'_p$ is a $p$-group.
Then $J'_p \cap H$ is a normal subgroup of $H$ such that $ H / J'_p \cap H \leq J / J'_p$ is a $p$-group.  Hence $ H'_p \leq J'_p \leq  K'_p $ and  Lemma \ref{omeg} gives the result.
\end{proof}

The space  $\Phi G$ has only finitely many elements if and
only if the Weyl group of a maximal torus $T$ of $G$ acts
trivially on $T$ \cite[5.10.8]{td}. Hence   $A(G)$
is a Noetherian ring if and only if the Weyl group of a
maximal torus $T$ of $G$ acts trivially on $T$.

\section{Idempotent and unit  elements in $\mathbf{A(G)}$}

The idempotent elements in $ A(G)$, and the idempotent elements in
 $A(G)$
with  a set of primes inverted,  have been completely
described   \cite{tdi} \cite{dre} \cite{fol}  \cite{yos}.
For example,  when   $G$ is a finite group, then   $A(G)$ has no idempotent elements
different from  0 and 1 if and only if $G$ is a
solvable group
\cite[5.11.4]{td}.
 This fact
was emphasized in \cite{dre}.

Let $\pi$ be a collection of  prime numbers. A  group $H$ is $\pi$-perfect if the group of components of $H$ does not have a nontrivial solvable quotient $\pi$-group.
 Let $\Phi_{\pi} (G)$ be the subspace of $\Phi G$ consisting of conjugacy classes of $\pi$-perfect subgroups of $G$ with finite Weyl group.

 Let $X$ be  a topological space, and let $\Pi_0 (X)$   denote the space of components (with the quotient topology from $X$). Let  $R$ be a ring, and  let
  $R_{(\pi)}$ denote the localization of $R$ obtained by inverting all primes not in the set $\pi$.
 There is a map $ \beta \colon \Phi_{\pi} (G) \to \Pi_0 (\spec A(G)_{(\pi)}) $ defined by sending $(H)$ to the component of the prime ideal $q(H, 0)$.
 The following was proved  in \cite[Proposition 3.3]{fol}.
\begin{pro}
The map  $$ \beta \colon \Phi_{\pi} (G) \to \Pi_0 (\spec A(G)_{(\pi)}) $$ is a homeomorphism.
\end{pro}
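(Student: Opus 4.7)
The plan is to identify both sides as quotients of $\Phi G$ by a common equivalence relation. First, I would analyze $\spec A(G)_{(\pi)}$: localizing at $\pi$ inverts exactly the maximal ideals $q(H,p)$ with $p\notin\pi$, leaving minimal primes $q(H,0)$ for $(H)\in\Phi G$ and maximal primes $q(H,p)$ for $p\in\pi$. Since the Krull dimension is one, the connected components of $\spec A(G)_{(\pi)}$ correspond to equivalence classes of minimal primes under the relation generated by ``both contained in a common maximal ideal''. By the containment criterion preceding Proposition \ref{bm}, together with the identification $q(H,p)=q(K,p)\iff (H^p)=(K^p)$ from the subsequent lemma, this is precisely the equivalence relation on $\Phi G$ generated by $(H)\sim_p(K)\iff(H^p)=(K^p)$ for some $p\in\pi$.

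The next step is to show every equivalence class contains exactly one $\pi$-perfect representative. For existence, I would iterate the $p$-perfection $(K)\mapsto(K_p)$ for $p\in\pi$; each step preserves the equivalence class because $q(K,p)=q(K_p,p)$ by Lemma \ref{maxideals}, and it strictly decreases the $\pi$-part of $|K/K^\circ|$, so the procedure terminates at some $\pi$-perfect $(H)$. For uniqueness, if $H$ is $\pi$-perfect then $H/H^\circ$ has no nontrivial solvable $\pi$-group quotient, so $H'_p=H$ and $(H_p)=\omega(H)=(H)$ for every $p\in\pi$. Hence if $H,K$ are both $\pi$-perfect with $(H^p)=(K^p)$, Lemma \ref{maxideals} yields
$$(H)=(H_p)=((H^p)_p)=((K^p)_p)=(K_p)=(K).$$
To handle multi-step chains I would introduce the canonical closure $(L)\mapsto(L^\flat)$ defined by iterating $p$-perfections for $p\in\pi$ until stable, well-defined up to conjugation because the intrinsic subgroup $L'_\pi$ (the smallest normal subgroup of $L$ with solvable $\pi$-group quotient) is characteristic, and verify that the single-step identity $(L^p)=((L')^p)$ forces $(L^\flat)=((L')^\flat)$ by the computation just displayed.

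Finally I would establish the topological statement. The map $(H)\mapsto q(H,0)$ from $\Phi G$ to $\spec A(G)_{(\pi)}$ is continuous, since the preimage of a basic closed set $V(f)$ equals $\{(H)\colon \phi_H(f)=0\}$, which is closed by the continuity of $\phi_H$ established in the section on the mark homomorphism. Postcomposing with the quotient to $\Pi_0\spec A(G)_{(\pi)}$ and restricting to $\Phi_\pi(G)$ yields $\beta$ as a continuous map into a compact Hausdorff space, namely the Stone space of the Boolean algebra of idempotents of $A(G)_{(\pi)}$. Since only finitely many primes divide any component group order $|H/H^\circ|$ for $(H)\in\Phi G$, being $\pi$-perfect reduces to finitely many closed conditions, so $\Phi_\pi(G)$ is closed in the compact space $\Phi G$, hence itself compact. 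A continuous bijection from a compact space to a Hausdorff space is a homeomorphism, completing the proof.

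The main obstacle is the uniqueness portion of the second step: showing that iterated $\pi$-perfection descends to a well-defined invariant of equivalence classes. This rests on the commutativity up to conjugation of the $p$-perfections for different $p\in\pi$, which I would derive from the intrinsic characterization of $L'_\pi$ as the smallest normal subgroup with solvable $\pi$-group quotient, so that the composite operation is independent of the order in which the primes of $\pi$ are processed.
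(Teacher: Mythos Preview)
The paper does not give a proof of this proposition; it only cites \cite[Proposition 3.3]{fol}. Your outline has the right architecture---identify components of $\spec A(G)_{(\pi)}$ with equivalence classes of minimal primes, show each class meets $\Phi_\pi(G)$ in exactly one point, then upgrade the bijection to a homeomorphism---and the bijection portion is essentially sound. Your termination argument for the iteration $(K)\mapsto(K_p)$ is correct once one notes that $(K_p)^\circ\supseteq K^\circ$ and hence $|K_p/(K_p)^\circ|$ properly divides $|K/K^\circ|$ when $K$ is not $p$-perfect; and you rightly flag the well-definedness of the iterated closure $(L)\mapsto(L^\flat)$ as the subtle point on that side.

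The genuine gap is in the topological half. Your argument that $\Phi_\pi(G)$ is closed in $\Phi G$ rests on the assertion that ``only finitely many primes divide any component group order $|H/H^\circ|$ for $(H)\in\Phi G$''. Read pointwise (for each fixed $H$) this is trivially true but yields no closed condition on $\Phi G$; read uniformly (a single finite set of primes works for all $(H)$) it is false already for $G=O(2)$, where the dihedral subgroups $D_n=\zz/n\rtimes\zz/2$ lie in $\Phi G$ and have $|D_n/D_n^\circ|=2n$. What is actually required is that the $\pi$-perfection map $(H)\mapsto(H^\flat)$ is \emph{continuous} on $\Phi G$; then $\Phi_\pi(G)$, being its fixed-point set (equivalently its image), is closed and hence compact. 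Establishing this continuity is exactly the content of the cited paper \cite{fol}, and it is not a formality: one must control how the subgroup $H'_\pi$ (the smallest normal subgroup with solvable $\pi$-quotient) varies under small perturbations of $H$, and this does not follow from Montgomery--Zippin alone. In short, the step you treat as routine bookkeeping is precisely the substantive result being quoted.
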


There is a close connection between idempotent elements in
$A(G)$ and unit elements in $A(G)$.  The following  is a consequence  of
the embedding of $A(G)$ into $C(G)$. If $e$ is an idempotent element in $A(G)$, then
$2e - 1$ is a unit element in $A(G)$. If $u $ is a unit element in
$A(G)$, then $ \frac{ \phi (u) +1 }{2}$ is an idempotent element in $
C(G)$.   If $G$ is a compact Lie group and $|G|$ is odd, then $ \frac{  u +1 }{2}$ satisfies
the congruence relations of   Proposition \ref{congruencerelations}, because both $u$ and $1$
satisfy the relations and $|W_G H|$ is not divisible by $2$ for any $(H) \in \Phi G$.  Proposition \ref{congruencerelations}
gives that $ \frac{  u +1 }{2}$ is in $A(G)$. Hence there is a bijection between idempotent elements and unit elements in $A( G)$ when $|G|$ is  odd.

There is a homomorphism $ R (G ; \rr ) \rarr A(G)^{\times} $
given by sending a real representation $ V $  to the function $
(-1)^{\text{dim} V^H} $ \cite[5.5.9]{td}.
 Tornehave has proved that this  map is surjective
 when $G$ is  a finite 2-group \cite{tor}.

 \section{A map from the Burnside ring to the  representation ring} Let $k$ be a field.
There is a canonical map $$ A(G) \rarr R(G; k) $$ from the
Burnside ring of $G$ to the representation ring of $G$ with
coefficients in  $k$. The map is given by sending $ G
/H$ to the alternating sum of the $G $-representations $ H^i (
G /H ; k )$ \cite{bside1}.
Here $ H^i (
G /H ; k )$ is the $i$-th singular cohomology of $G/H$ with coefficients in $k$ endowed  with a left $G$-action induced by the left $G$-action on $G/H$.
The  map $ A(G) \rarr R(G; k)$ is neither injective
nor surjective in general. However if $P$ is a finite
$p$-group, then  $ A(P)  \rarr R(P ; \qq
) $ is surjective \cite{per}.

 The composition $ A(G ) \rarr R( G ; \rr ) \rarr A(G)^{\times} $ is called the exponential map of the Burnside ring. This map  is surjective if $ G$ is a $2$-group with  no subquotients isomorphic to the dihedral group of
order 16 \cite{erg}.

\section{Modules over $A(G)$}
Modules over $A(G)$ have been studied by tom Dieck and Petrie \cite{mod}.
Much attention has been given to invertible modules over $A(G)$ \cite{ pic} \cite{picLie}  \cite{picmod}.
These are closely related to stable homotopy representations.
A homotopy representation is a retract of a finite $G$-CW complex $X$ such that $X^H$ is homotopy equivalent to $S^{n(H)}$ for some integer $n(H)$, for each subgroup $H$ in $G$.
A stable homotopy representation is a suspension spectrum of a homotopy representation.
 Stable homotopy representation are
 exactly the    invertible
objects  in the stable equivariant homotopy category \cite{picLie} \cite{flm}.

The finite groups $G$ such that there are only a finite number of  finitely generated indecomposable $A(G)$-modules (which are free
over the integers) are characterized by Reichenbach  in  \cite{rei}.

\section{The Burnside ring in equivariant stable homotopy theory}
\label{sec:htptheory}
Let $X$ be a finite $G$-CW-complex.
 The (categorial) Euler characteristic of $\Sigma^{\infty}_G X$ turns out to be the
stable homotopy class
$$ \Sigma^{\infty}_G S^0 \stackrel{\tau}{\rarr} \Sigma^{\infty}_G X_+
\stackrel{c}{\rarr} \Sigma^{\infty}_G S^0 $$ where the first
map is the transfer map and the last map is the collapse map
that sends $ X$ to a point \cite[Chapter XVII]{ala}. This defines a homomorphism
$$ \chi  \colon  A (G) \rarr \pi_{0}^G ( \Sigma^{\infty}_G  S^0 ) =  \pi_{0}^G
( S^0 ) . $$ There is a degree homomorphism  $$ d  \colon  \pi_{0}^G ( S^0 )
\rarr C (G) $$ which  sends a stable self map $ h  \colon
\Sigma^{\infty}_G S^0 \rarr \Sigma^{\infty}_G S^0 $ to the
continuous function that map
 $(K)$ to the degree of (the geometric fixed points)  $ h^K $ as a stable self map of
 $ \Sigma^{\infty} S^0 $.
The composite  $ d \circ \chi $ equals $ \phi $.

\begin{thm}  \label{htp} The Euler characteristic map
$$ \chi  \colon  A (G) \rarr \pi_{0}^G (  S^0 )   $$ is an isomorphism.
\end{thm}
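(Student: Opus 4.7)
The plan is to exploit the commutative triangle $d \circ \chi = \phi$ together with the map $\phi \colon A(G) \to C(G)$ already under control from the preceding sections. Injectivity of $\chi$ is then immediate: if $\chi(a) = 0$ then $\phi(a) = d(\chi(a)) = 0$, and $a = 0$ by Lemma \ref{lem:phiinj}. So the real content is surjectivity.

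My approach to surjectivity is to establish two auxiliary facts and combine them. First, the degree homomorphism $d \colon \pi_0^G(S^0) \to C(G)$ is itself injective. Second, the image of $d$ coincides with $\phi(A(G))$ as a subring of $C(G)$. Granting these, any $\alpha \in \pi_0^G(S^0)$ satisfies $d(\alpha) = \phi(a)$ for some $a \in A(G)$, hence $d(\alpha - \chi(a)) = 0$, so $\alpha = \chi(a)$ by injectivity of $d$.

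For injectivity of $d$, I would argue that an equivariant stable self-map of $S^0$ is detected by its geometric fixed-point degrees. This is proved by an induction organized by the poset $\Phi G$ of conjugacy classes with finite Weyl group. Since $\Phi G$ is compact (section \ref{sect:mark}), at each stage one chooses a maximal conjugacy class $(H)$ not yet handled and sets up the family $\mathcal{F}$ of subgroups not conjugate to $H$; the isotropy separation cofiber sequence $E\mathcal{F}_+ \to S^0 \to \tilde E \mathcal{F}$, together with the identification of $\pi_0^G$ of the $\tilde E \mathcal{F}$ piece via the geometric fixed-point functor $\Phi^H$, shows that the $(H)$-component of the degree function determines the class modulo smaller conjugacy classes. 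Iterating and using the finiteness of support provided by continuity of $d(\alpha)$ on compact $\Phi G$ gives injectivity. This is the main obstacle: it requires genuine input from equivariant stable homotopy theory beyond what is developed in the paper.

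For the image of $d$, I would show it satisfies the congruence relations of Proposition \ref{congruencerelations}, so $\mathrm{im}(d) \subseteq \phi(A(G))$, while the reverse inclusion holds because $\phi = d \circ \chi$ factors through $d$. The congruences come by applying Lemma \ref{lem:congruences} to the natural free $W_G H$-action on the geometric fixed-point spectrum $(\Sigma_G^\infty S^0)^H$ for each $(H) \in \Phi G$: summing degrees of $h^C$ over cyclic extensions $H \lhd C$ reproduces the trace of $h^H$ under the $W_G H$-action, which is divisible by $|W_G H|$. This mirrors exactly the proof of Proposition \ref{congruencerelations}, so no new combinatorial input is needed once the geometric fixed-point bookkeeping is in place. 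Combining the two auxiliary facts with the triangle $d \circ \chi = \phi$ completes the proof.
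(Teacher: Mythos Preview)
Your strategy is sound and in fact coincides with the second of the two arguments the paper sketches. The paper's primary route to surjectivity is different: it invokes the spectrum-level Segal--tom Dieck splitting to see directly that $\pi_0^G(S^0)$ is additively generated by transfer--collapse composites $\Sigma_G^\infty S^0 \to \Sigma_G^\infty (G/H)_+ \to \Sigma_G^\infty S^0$, which are manifestly Euler characteristics of orbits. Your approach---show $d$ injective by equivariant obstruction theory and show $\mathrm{im}(d)$ satisfies the congruences of Proposition~\ref{congruencerelations}---is exactly the ``another proof'' the paper mentions, with details deferred to tom Dieck's book. The splitting argument is shorter once one accepts that machinery as a black box; your argument stays closer to the algebraic description of $A(G)$ developed in the paper and makes the congruence relations do real work.

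One point in your sketch deserves tightening. In the congruence step you speak of the $W_G H$-action on the \emph{geometric} fixed-point spectrum $(\Sigma_G^\infty S^0)^H$, but the geometric $H$-fixed points of $S_G^0$ is just the non-equivariant sphere with trivial $W_G H$-action, so Lemma~\ref{lem:congruences} gives nothing there. What one actually uses is an unstable representative $f \colon S^V \to S^V$; then $f^H \colon S^{V^H} \to S^{V^H}$ is a genuine $W_G H$-map on a $W_G H$-representation sphere, and $(f^H)^{C/H} = f^C$. The congruence $\sum_C n_{C/H}\,\deg(f^C) \equiv 0 \pmod{|W_G H|}$ then follows from the equivariant degree congruences for finite-group sphere maps (tom Dieck, \cite[Chapter 8]{td}), which are the sphere-map analogue of Lemma~\ref{lem:congruences} rather than a direct application of it. With that correction your outline goes through.
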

The injectivity of $\chi$ follows from the injectivity of $\phi$. The
surjectivity of $\chi$ requires a more careful   understanding of
$\pi_{0}^G (S^0 )$.
 It suffices  to show that $ \pi_{0}^G ( S^0 ) $ is additively
generated by maps of the form $ \Sigma^{\infty}_G S^0
\stackrel{\eta}{\rarr} \Sigma^{\infty}_G X_+
\stackrel{c}{\rarr} \Sigma^{\infty}_G S^0 $.
This follows from the spectrum level Segal--tom Dieck splitting
theorem \cite[IV.9.3]{lms}.

Another  proof  consists of using equivariant obstruction theory to
show that $d$ is injective and to show that every function in the
image of $d$ satisfies the congruence relations of Proposition
\ref{congruencerelations}. Since $d$ is injective
the map  $\chi$ must then  be surjective. The details are given in
\cite[Chapter 8]{td}.

A variation of Theorem \ref{htp}  is the    Segal conjecture, proved by Carlsson \cite{car}.
 Let $G$ be  a finite group and let $EG$ be  a free contractible $G$-space.  The stable $G$-homotopy classes of
  maps from $\Sigma^{\infty}_G EG_+ $ to $ \Sigma^{\infty}_G S^0$ is isomorphic to the  completion of the Burnside ring  at its augmentation ideal.
This is a deep result of importance in homotopy theory.

\end{document}